\title{Tiling Groupoids And Bratteli Diagrams\footnote{Work supported by the NSF grants no. DMS-0300398 and no. DMS-0600956.}}
\author{J. Bellissard, A. Julien, J. Savinien}
\date{ }
\newtheorem{theo}{Theorem}[section]
\newtheorem{defini}[theo]{Definition}
\newtheorem{proposi}[theo]{Proposition}
\newtheorem{lemma}[theo]{Lemma}
\newtheorem{coro}[theo]{Corollary}
\newtheorem{rem}[theo]{Remark}
\newtheorem{hypo}[theo]{Hypothesis}
\newcommand{\Bb}{{\mathcal B}}
\newcommand{\Ee}{{\mathcal E}}
\newcommand{\Ff}{{\mathcal F}}
\newcommand{\Hh}{{\mathcal H}}
\newcommand{\Ll}{{\mathcal L}}
\newcommand{\Vv}{{\mathcal V}}
\newcommand{\NM}{{\mathbb N}}
\newcommand{\RM}{{\mathbb R}}
\newcommand{\ZM}{{\mathbb Z}}
\newcommand{\htt}{{\hat{t}}}
\newcommand{\ts}{{\mathscr T}}
\newcommand{\vs}{{\mathscr V}}
\newcommand{\Cs}{$C^{\ast}$-algebra }         
\newcommand{\Css}{$C^{\ast}$-algebras }       
\newcommand{\CsS}{$C^{\ast}$-algebras}             
\newcommand{\supp}{\mbox{\rm supp}}                
\newcommand{\eaf}{\stackrel{\mbox{\tiny\sc AF}}{\sim}}
\newcommand{\dist}{\text{\rm dist}}                
\newcommand{\col}{\text{\rm Col}}                  
\newcommand{\punc}{\text{\rm punc}}                
\newcommand{\eps}{\varepsilon}
\newcommand{\op}{\text{\rm op}}                    
\begin{document}

\maketitle

\begin{abstract}
\noindent Let $T$ be an aperiodic and repetitive tiling of $\RM^d$ with finite local complexity.
Let $\Omega$ be its tiling space with canonical transversal $\Xi$.
The tiling equivalence relation $R_\Xi$ is the set of pairs of tilings in $\Xi$ which are translates of each others, with a certain (\'etale) topology.
In this paper $R_\Xi$ is reconstructed as a generalized ``tail equivalence'' on a Bratteli diagram, with its standard $AF$-relation as a subequivalence relation.

\vspace{.1cm}

\noindent Using a generalization of the Anderson--Putnam complex \cite{BBG06} $\Omega$ is identified with the inverse limit of a sequence of finite $CW$-complexes. A Bratteli diagram $\Bb$ is built from this sequence, and its set of infinite paths $\partial \Bb$ is homeomorphic to $\Xi$.
The diagram $\Bb$ is endowed with a horizontal structure: additional edges that encode the adjacencies of patches in $T$. This allows to define an \'etale equivalence relation $R_\Bb$ on $\partial \Bb$ which is homeomorphic to $R_\Xi$, and contains the $AF$-relation of ``tail equivalence''.
\end{abstract}


\tableofcontents


 \section{Introduction}
 \label{bratteli09.sect-intro}

 This article describes a combinatorial way to reconstruct a tiling space and its groupoid, using Bratteli diagrams. In a forthcoming paper, this description will be used for tilings given by multi-substitutions \cite{BJS09}.

 \vspace{.3cm}

  \subsection{Results}
  \label{bratteli09.ssect-res}

 Given a repetitive, aperiodic tiling $T$ with finite local complexity (FLC) in $\RM^d$, its tiling space $\Omega$, or hull, is a compact space obtained by taking a suitable closure of the family of tilings obtained by translating $T$. By construction, the translation group $\RM^d$ acts on $\Omega$ by homeomorphisms making the pair $(\Omega,\RM^d)$ a topological dynamical system.
 It is well known that repetitivity (or \emph{uniform repetitivity}, as it is called in the symbolic one-dimensional case) is equivalent to the minimality of this action, see~\cite{Qu87,RW92}.
 Equivalently, this dynamical system can be described through a groupoid \cite{Co79,Re80,Co94}, denoted by $\Omega\rtimes \RM^d$, called the {\em crossed product} of the tiling space by the action.
If the tiles of $T$ are punctured, the subset of $\Omega$ made of tilings with one puncture at the origin of $\RM^d$ is a compact subset $\Xi$ called the {\em canonical transversal} \cite{Be86}.
For quasi-crystals this identifies with the so-called {\em atomic surface} \cite{BHZ00,LectQC}.
It has been shown that FLC implies that the transversal is completely disconnected \cite{Ke97}, while aperiodicity and minimality eliminate isolated points, making it a Cantor set. Similarly, there is an {\em \'etale} groupoid $\Gamma_\Xi$ associated with it \cite{Co79,Re80}, which plays a role similar to the Poincar\'e first return map in usual dynamical systems. This groupoid is defined by the equivalence relation $R_\Xi$ identifying two tilings of the transversal differing by a space translation.

 \vspace{.1cm}

 While the existence of the hull does not require much information, it can be quite involved to have an effective description which allows computations.
 The first step in this direction came from the work of Anderson and Putnam for substitution tilings \cite{AP98}.
 A substitution is a rule describing how each tiles, after suitable rescaling, is covered by other tiles touching along their faces.
 The most publicized example is the Penrose tiling in its various versions, like kites and darts \cite{Pen74}.
Anderson and Putnam built a $CW$-complex $X$ of dimension $d$, with $d$-cells given by suitably decorated prototiles and showed that the substitution induces a canonical map $\phi:X \rightarrow X$.
Then the hull can be recovered as the inverse limit $\Omega =\varprojlim (X,\phi)$.
This construction has several generalizations for repetitive, aperiodic, and FLC tilings without a substitution rule, see~\cite{BBG06,Gah,Sad03}.
It is proved that there is a sequence of finite $CW$-complexes $(X_n)_{n\in\NM}$ of dimension $d$ and maps $\phi_n:X_{n+1}\rightarrow X_n$ so that the hull is given by the inverse limit $\Omega =\varprojlim (X_n,\phi_n)$.
It is even possible to choose these $CW$-complexes to be smooth branched manifolds~\cite{BBG06}.

 \vspace{.1cm}

The first goal of the present paper is to describe the construction of a Bratteli diagram from these data in Section \ref{bratteli09.ssect-brat}. A Bratteli diagram is a graph with a marked vertex $\circ$ called the {\em root}. The set of vertices $\vs$ is graded by a natural integer, called the {\em generation}, so that $\vs= \bigcup_{n\in\NM}\vs_n$ with $\vs_0=\{\circ\}$ and $\vs_n\cap\vs_m=\emptyset$ if $n\neq m$. Then edges exist only between $\vs_n$ and $\vs_{n+1}$. In the present construction, each vertex in $\vs_n$ is given by a $d$-cell of $X_n$. Then there is an edge between a vertex $v\in\vs_n$ and $w\in\vs_{n+1}$ if and only if the $d$-cell $v$
can be found inside $w$. Each edge will be  labeled by the translation vector between the puncture of $w$ and the one of $v$ inside $w$. Namely at each generation $n$, the Bratteli diagram encodes how a $d$-cell of $X_{n+1}$ is filled by $d$-cells of $X_n$. More precisely it encodes not only which cells in $X_n$ occur but also their position relative to the $d$-cell of $X_{n+1}$. As in \cite{BBG06}, since the family $(X_n)_{n\in\NM}$ is not unique, there are several choices to build a Bratteli diagram associated with a tiling space. 

 \vspace{.1cm}

Conversely, starting from such a Bratteli diagram, a tiling can be described as an infinite path starting from the root. Such a path describes how a given patch at generation $n$, is embedded in a larger patch at generation $n+1$. By induction on the generations, an entire tiling is reconstructed. In addition, the origin of the tiling is defined uniquely at each step, thanks to the label of the edges involved. If each patch involved in the construction is decorated by its {\em collar}, namely provided that the tiling {\em forces its border}\footnote{a notion initially introduced by Kellendonk for substitution tilings~\cite{Ke97}} in the sense of \cite{BBG06}, the tiling obtained eventually covers $\RM^d$, even if the origin is at a fixed distance from the boundary of the patch, because the collar, beyond this boundary, increases in size as well. As a result the transversal $\Xi$ is recovered, as a compact topological space, from the space of infinite rooted paths in the diagram (Theorem \ref{bratteli09.thm-isotrans}).

 \vspace{.1cm}

In a Bratteli diagram, the {\em tail equivalence} identifies two infinite paths differing only on a finite number of edges (Definition \ref{bratteli09.def-AFER}). It gives rise to a groupoid called the {\em AF-groupoid} of the diagram. In the language of tiling spaces, this groupoid describes the translation structure inside the tiling, up to an important obstacle. Namely, a tiling built from an infinite path with the origin at a fixed distance from the boundary of the corresponding patches cannot be identified, modulo the tail equivalence, with a tiling built from a path with origin at a fixed distance from the same boundary but located on the other side of it. As a result, a tiling obtained in this way will be subdivided into regions, that will be called {\em AF-regions}  here (Definition \ref{bratteli09.def-AFregion}), separated by boundaries. Note that Matui has found similar features for a class of $2$-dimensional substitution tilings in \cite{Ma06}. 
By contrast, the groupoid of the transversal $\Gamma_\Xi$ (Definition \ref{bratteli09.def-gpd}) allows to identify two such regions through translation.
So that recovering $\Gamma_\Xi$ from the Bratteli diagram requires to change the definition of the tail equivalence. The present paper offers a solution to this problem by adding a {\em horizontal structure} to the diagram making it a {\em collared Bratteli diagram} (Definition \ref{bratteli09.def-cBrat}). Its aim is to describe in a combinatorial way, from local data, how to locate a tile in a patch, relative to its boundary. Practically it consists in adding edges between two vertices of the same generation, describing pairs of tiles, in a pair of patches, each in the collar of one another (see Figure \ref{bratteli09.fig-horizontal}). These edges are labeled by the translation between these tiles. In other words, the horizontal edges describe how to glue together two patches in a tiling across an AF-boundary.
Then it becomes possible to extend the tail equivalence into an \'etale equivalence relation on the collared Bratteli diagram (Definition \ref{bratteli09.def-eqrel} and Theorem \ref{bratteli09.def-etale}) in order to recover the groupoid of the transversal (Theorem \ref{bratteli09.thm-eqrel} and Corollary \ref{bratteli09.cor-groupoid}), with the AF-groupoid as a sub-groupoid (Remark \ref{bratteli09.rem-AFcomp}).

\vspace{.1cm}

\noindent In the particular case of $1$-dimensional tilings, this larger equivalence relation is generated by the AF-relation and finitely many pairs of minimal and maximal paths in the diagram, which are derived from the collared diagram in a natural and explicit way (Proposition \ref{bratteli09.prop-1d}).
The examples of the Fibonacci and Thue-Morse tilings are illustrated in details in Section~\ref{bratteli09.sect-examples}.
This study of $1$-dimensional tilings also allows to view the paths whose associated tilings have their punctures at a minimum distance to an AF-boundary (Corollary \ref{bratteli09.cor-Fsigma}) as generalizations of extremal paths for $1$-dimensional systems.

 \vspace{.3cm}

  \subsection{Background}
  \label{bratteli09.ssect-hist}

 \noindent This work gives one more way of describing tilings or their sets of punctures, and their groupoids, liable to help describing various properties of tiling spaces, such as their topology or their geometry. It benefited from almost thirty years of works with original motivation to describe more precisely the properties of aperiodic solids. In particular the notion of hull, or tiling space, was described very early as a fundamental concept \cite{Be86} encoding their macroscopic {\em translation invariance}, called homogeneity. See for instance \cite{Be93,Be01} for reviews and updates.

 \vspace{.1cm} 

 \noindent During the eighties, the discovery of quasicrystals \cite{SBGC84} was a landmark in this area and stimulated a lot of mathematical works to describes this new class of materials \cite{LectQC,Se95}. In particular, it was very convenient to represent the structure of these materials by various examples of tilings, such as the Penrose tiling or its $3$-dimensional analogs \cite{Pen74,Kr82,LectQC}. It was shown subsequently \cite{KG95} that such structures are liable to describe precisely the icosahedral phase of quasicrystalline alloys such as \mbox{$\mathrm{Al}\mathrm{Cu}\mathrm{Fe}$}.

 \vspace{.1cm}

 \noindent In the end of the eighties, Connes attracted the attention of mathematicians to the subject by showing that the Penrose tiling was a typical example of a noncommutative space~\cite{Co94}. It is remarkable that already then, Connes used a Bratteli diagram to encode the combinatorics of patches between generations. Its is important to realize, though, that the construction given by Connes was based on the {\em substitution} proposed originally by Penrose \cite{Pen74} but ignored entirely the additional structure given by space {\em translations}. In the context of the present paper, including the translations is the key reason leading to the extension of the $AF$-relation. This leads to a non-$AF$ \Cs instead, a difficulty at the source of so many works during the last twenty years. 

 \vspace{.1cm}

 \noindent Bratteli created his diagrams in \cite{Bra72} to classify $AF$-algebras, namely \Css obtained as the unions of finite dimensional \CsS. It took two decades before it was realized that, through the notion of {\em Vershik map} \cite{Ve81} such diagrams could encode any minimal homeomorphism of the Cantor set \cite{Sk90,HPS92}. The corresponding crossed product \Cs was shown to characterize the homeomorphism up to orbit equivalence. This classification was a natural extension of a similar problem for ergodic actions of $\ZM$, a problem solved by Krieger and Connes within the framework of von Neumann algebras (see \cite{CK77} for instance). This program was continued until recently and lead to the proof of a similar result for minimal actions of $\ZM^d$ on the Cantor set \cite{For97,DHS99,GPS04,Phi05,GPS06,GMPSa08,GMPSc08}. In particular, as a consequence of this construction, Giordano--Matui--Putnam--Skau proved that any minimal $\ZM^d$-action on the Cantor set is orbit equivalent to a $\ZM$-action \cite{GMPSb08}. Moroever, the groupoid of the transversal of every aperiodic repetitive FLC tiling space is orbit equivalent to a minimal $\ZM$-action on the Cantor set \cite{AGL09}. As it turns out the formalism described in the present work is similar to and, to a certain extend inspired by, the construction of {\em refined tessellations} made by Giordano {\it et al.} in \cite{GMPSb08}. 

 \vspace{.5cm}

 \noindent This paper is organized as follows: the Section~\ref{bratteli09.sect-tilgpd} contains the basic definitions about tilings, groupoids, and the constructions existing in the literature required in the present work. Section~\ref{bratteli09.sect-gpdbrat} is dedicated to the definition and properties of the Bratteli diagram associated with a tiling. The main reconstruction theorems are reproduced there.
 The last Section~\ref{bratteli09.sect-examples} illustrates the construction for $1$-dimensional substitution tilings, and treats the two examples of the Fibonacci and Thue--Morse tilings in details.

\vspace{.3cm}

\noindent {\bf Acknowledgements: } The three authors are indebted to Johannes Kellendonk for his invaluable input while starting this work. J.S. would like to thank John Hunton for discussions and for communicating information about his research using the notion of Bratteli diagrams. This work was supported by the NSF grants no. DMS-0300398 and no. DMS-0600956, by the School of Mathematics at Georgia Tech in the Spring 2009 and by the SFB 701 (Universit\"at Bielefeld, Germany) in the group led by Michael Baake.

\newpage
\section{Tilings and tiling groupoids}
\label{bratteli09.sect-tilgpd}

This section is a reminder. The basic notions about tilings and their groupoids are defined and described. The finite volume approximations of tiling spaces by branched manifolds are also summarized \cite{BBG06}.

\subsection{Tilings and tiling spaces}
\label{bratteli09.ssect-tilspaces}

All tilings in this work will be subsets of the $d$-dimensional Euclidean space $\RM^d$. Let $B(x,r)$ denote the open ball of radius $r$ centered at $x$.

\begin{defini}
\label{bratteli09.def-tiling}
\begin{enumerate}[(i)]

\item A {\em tile} is a compact subset of $\RM^d$ which is the closure of its interior.

\item A {\em punctured tile} $t_x$ is an ordered pair consisting of a tile $t$ and a point $x\in t$.

\item A {\em partial tiling} is a collection $\{ t_i\}_{i\in I}$ of tiles with pairwise disjoint interiors. The set\/ \(\bigcup_{i\in I} t_i\) is called its {\em support}.

\item A {\em patch} is a finite partial tiling. A patch is punctured by the puncture of one of the tiles that it contains.

\item A {\em tiling} is a countable partial tiling with support $\RM^d$.
A tiling is said to be punctured if its tiles are punctured.

\item The {\em inner radius} of a tile or a patch is the radius of the largest ball (centered at its puncture) that is contained in its support.
The {\em outer radius} is the radius of the smallest ball (centered at its puncture) that contains its support.
\end{enumerate}
\end{defini}

\begin{hypo}
\label{tLap09.hypo-puncCW}
From now all tiles and tilings are punctured. 
In addition, each tile is assumed to have a finite $CW$-complex structure.
The $CW$-complex structures of the tiles in a tiling are compatible: the intersection of two tiles is a subcomplex of both.
In particular, the support of a tiling gives a $CW$-complex decomposition of $\RM^d$.
\end{hypo}

\begin{rem}
\label{bratteli09.rem-}
{\em
\begin{enumerate}[(i)]
\item Definition \ref{bratteli09.def-tiling} allows tiles and patches to be {\em disconnected}.
\item Tiles or patch are considered as subsets of $\RM^d$. If $t$ is a tile and $p$ a patch, the notations {\em $t\in T$ and $p\subset T$} mean {\em $t$ is a tile and $p$ is a patch of the tiling $T$, at the positions they have as subsets of $\RM^d$}.
\end{enumerate}}
\end{rem}

\noindent The results in this paper are valid for the class of tilings that are {\em aperiodic, repetitive}, and have {\em finite local complexity}.

\begin{defini} 
\label{bratteli09.def-repFLC}
Let $T$ be a tiling of $\RM^d$.

\begin{enumerate}[(i)]

\item $T$ has {\em Finite Local Complexity} (FLC) if for any $\rho>0$ there are up to translation only finitely many patches of outer radius less than $\rho$.

\item If $T$ has FLC then it is {\em repetitive} if given any patch $p$, there exists $\rho_p>0$ such that for every $x\in\RM^d$, there exists some $u\in\RM^d$ such that \( p+ u \in T\cap B(x,\rho_p)\).

\item For $a \in\RM^d$ let \( T +a = \{ t+a \, : \, t \in T \}\) denote the translate of $T$ by $a$. Then $T$ is {\rm aperiodic} if \(T+a \ne T\), for all vector $a\ne0$.

\end{enumerate}

\end{defini}

From now on, we will only consider tilings which satisfy the conditions above.
Note that FLC implies that there are only finitely many tiles up to translation.

One of the authors defined in \cite{BHZ00} a topology that applies to a large class of tilings (even without FLC).
In the present setting, this topology can be adapted as follows.
Let $\Ff$ be a family of tilings. 
Given an open set $O$ in $\RM^d$ with compact closure and an $\epsilon>0$, a neighborhood of a tiling $T$ in $\Ff$ is given by
\[
U_{O,\epsilon}(T) = \left\{ T' \in \Ff \, : \,
    \exists x,y \in B(0,\epsilon), \ (T \cap O) + x = (T \cap O) + y \right\},
\]
where $T \cap O$ is the notation for the set of all cells of $T$ which intersect~$O$.


\begin{defini}
\label{bratteli09.def-hull}

\begin{enumerate}[(i)]

\item The {\em hull} or {\em tiling space} of $T$, denoted $\Omega$, is the closure of  \( T+\RM^d\) for the topology defined above.

\item The {\em canonical transversal}, denoted $\Xi$, is the subset of $\Omega$ consisting of tilings having one tile with puncture at the origin $0_{\RM^d}$.

\end{enumerate}

\end{defini}

By FLC condition, the infimum of the inner radii of the tiles of $T$ is $r>0$.
So the canonical transversal is actually an abstract transversal for the $\RM^d$ action in the sense that it intersects every orbit, and $(\Xi + u) \cap \Xi =\emptyset$ for all $u\in\RM^d$ small enough ($|u|<2r$).
The hull of a tiling is a dynamical system \( (\Omega, \RM^d)\) which, for the class of tilings considered here, has the following well-known properties (see for example \cite{BBG06} section 2.3).

\begin{theo}
\label{bratteli09.thm-hull}
Let $T$ be a tiling of $\RM^d$.

\begin{enumerate}[(i)]
\item $\Omega$ is compact.

\item $T$ is repetitive if and only if the dynamical system \( (\Omega, \RM^d) \) is {\rm minimal}.

\item If $T$ has FLC, then its canonical transversal $\Xi$ is totally disconnected \cite{Ke97,BHZ00}.

\item If $T$ is repetitive and aperiodic, then $\Omega$ is {\em strongly aperiodic}, {\it i.e.} contains no periodic points.

\item If $T$ is aperiodic, repetitive, and has FLC, then $\Xi$ is a Cantor set.

\end{enumerate}

\end{theo}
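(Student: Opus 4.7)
The proof is a sequence of five largely standard but inter-related arguments; the plan is to address each item in turn, reusing earlier items as needed, and to identify the argument for strong aperiodicity as the main obstacle.

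\textbf{Compactness of $\Omega$ (i).} The plan is to show $\Omega$ is sequentially compact by a diagonal extraction. Given a sequence $(T_n)$ in $\Omega$, FLC implies that for each $R>0$ the collection of possible patches $T_n\cap B(0,R)$ has only finitely many equivalence classes under small translations, so a subsequence can be chosen whose elements agree on $B(0,R)$ up to a translation of size $<1/R$. A diagonal extraction then produces a limit partial tiling defined on every bounded region, and the FLC bound on inner radii ensures the limit is a genuine tiling of $\RM^d$ close to each $T_{n_k}$ in the topology $U_{O,\eps}$. Closedness of $\Omega$ in the ambient space of tilings is immediate from the definition as a closure.

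\textbf{Repetitivity $\Leftrightarrow$ minimality (ii).} Forward: assuming $T$ repetitive, pick $T'\in\Omega$ and a basic neighborhood $U_{O,\eps}(T')$; approximating $T'$ by a translate of $T$ on a large ball, the neighborhood is described by a finite patch $p$ of $T$. Repetitivity with parameter $\rho_p$ gives that any translate of $T$ (hence any $T''\in\Omega$ by continuity and a compactness argument) contains a copy of $p$ within distance $\rho_p$ of any point, so the orbit of $T''$ visits $U_{O,\eps}(T')$ — this is minimality. Reverse: if $(\Omega,\RM^d)$ is minimal, fix a patch $p\subset T$ and consider the closed invariant set of tilings not containing $p$; minimality forces it empty, and a compactness argument then yields a uniform bound $\rho_p$.

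\textbf{Total disconnectedness (iii) and strong aperiodicity (iv).} For (iii), at any $T'\in\Xi$ the sets
\[
C_{R}(T') = \bigl\{T''\in\Xi : T''\cap B(0,R) = T'\cap B(0,R)\bigr\}
\]
are easily seen to be closed, and by FLC two tilings in $\Xi$ agreeing up to a small translation on $B(0,R)$ must in fact agree exactly (because punctured patches of bounded size form a discrete set under translation); hence $C_R(T')$ is also open and forms a clopen neighborhood basis of $T'$, yielding total disconnectedness. Item (iv) is the central technical point: suppose some $T'\in\Omega$ satisfies $T'+a=T'$ with $a\neq 0$. By minimality from (ii), $T$ lies in the orbit closure of $T'$, so for every $R$ there is $u_R\in\RM^d$ with $(T'+u_R)\cap B(0,R)$ equal to $T\cap B(0,R)$ up to small translation. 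The periodicity of $T'$ then forces $T\cap B(0,R-|a|)$ to coincide with its translate by $a$ up to a small correction; FLC makes the possible corrections a discrete set so, for $R$ large enough, the correction must be exactly $0$. Letting $R\to\infty$ gives $T+a=T$, contradicting aperiodicity. This step, the careful transfer of periodicity from a point in the hull to $T$ itself using FLC to eliminate small errors, is the main obstacle.

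\textbf{Cantor property (v).} Given (iii) and (iv), $\Xi$ is compact (closed subset of $\Omega$) and totally disconnected; it remains to exclude isolated points. Suppose $T'\in\Xi$ were isolated, so $\{T'\}$ is clopen. Then $V = T'+B(0,r)$ is open in $\Omega$ for small $r$, with $V\cap\Xi=\{T'\}$ by the transversal property $(\Xi+u)\cap\Xi=\emptyset$ for $|u|<2r$. Minimality from (ii) forces every orbit to return to $V$ at a syndetic set of times, and each return to $V\cap\Xi$ is a return precisely to $T'$. This uniform return pattern, combined with the strong aperiodicity from (iv), yields that the puncture set of any $T''\in\Omega$ is, up to a global shift, periodic under some nonzero translation, contradicting (iv). Hence $\Xi$ has no isolated points and is a Cantor set.
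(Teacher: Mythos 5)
The paper does not actually prove this theorem: it is quoted as background, with the work deferred to \cite{BBG06} (Section 2.3), \cite{Ke97} and \cite{BHZ00}, so there is no in-paper argument to compare yours against. Taken on their own terms, your arguments for (i)--(iv) are the standard ones and are sound under the paper's standing hypotheses (your compactness proof uses FLC, which is consistent with the theorem being asserted only ``for the class of tilings considered here''). For (iv) you could shorten considerably: the set $\{S\in\Omega : S+a=S\}$ is closed and translation-invariant, hence by minimality empty or all of $\Omega$, and $T$ does not belong to it; your hands-on approximation is a correct unwinding of the same fact, not an obstacle.

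The one genuine problem is the last step of (v). Minimality gives syndetic return times of the orbit of $T'$ to the \emph{open} set $V=\{T'+u : |u|<r\}$, not to $V\cap\Xi$; a return to $V$ need not land on $\Xi$ at all, so ``each return is a return precisely to $T'$'' is not available, and the asserted conclusion that ``the puncture set of any $T''\in\Omega$ is, up to a global shift, periodic'' does not follow from anything you have written. What the return analysis actually yields is a nonzero period of $T'$ itself: by relative density there is a return time $b$ with $|b|\ge r$, i.e.\ $T'+b=T'+u$ with $|u|<r$, so $b-u\ne 0$ is a period of $T'$, contradicting (iv). Alternatively, and more directly: if $T'$ were isolated in $\Xi$ then $\Xi(p)=\{T'\}$ for some patch $p$ of $T'$ at the origin; repetitivity produces another occurrence of $p$ in $T'$ at some $a\ne 0$, whence $T'-a\in\Xi(p)=\{T'\}$ and $a$ is a nonzero period of $T'$. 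Either repair closes the gap; as written, the step is not a proof.
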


By the minimality property, if $\Omega$ is the hull of a tiling $T$, any patch $p$ of any tiling $T' \in \Omega$ appears in $T$ in some position.
In case (iv), the sets
\begin{equation}
\label{bratteli09.eq-clopen}
\Xi(p) = \bigl\{ T' \in \Xi \ : \ T' \text{ contains (a translate of) $p$ at the origin}
\bigr\}\,,
\end{equation}
for $p \subset T$ a patch of $T$, form a base of clopen sets for the topology of $\Xi$.

\begin{rem}
\label{bratteli09.rem-topohull}
{\em A metric topology for tiling spaces has been used in the literature.
Let $T$ be a repetitive tiling or $\RM^d$ with FLC.
The orbit space of $T$ under translation by vectors of~$\RM^d$, \(T+\RM^d\), is endowed with a metric as follows (see \cite{BBG06} section 2.3).
For $T_1$ and $T_2$ in \(T+\RM^d\), let $A$ denote the set of $\varepsilon$ in $(0,1)$ such that there exists \(a_1,a_2 \in B(0, \varepsilon ) \) for which \(T_1+a_1\) and \(T_2+a_2\) agree on \(  B(0, 1/\varepsilon ) \), {\it i.e.} their tiles whose punctures lie in the ball are matching, then
\[
 \delta (T_1,T_2) = \min \big(\inf A, 1 \big).
\]
Hence the diameter of  \(T+\RM^d\) is bounded by $1$.
With this distance, the action of $\RM^d$ is continuous.

For the class of repetitive tilings with FLC, the topology of the hull given in definition \ref{bratteli09.def-hull} is equivalent to this $\delta$-metric topology \cite{BBG06}.}
\end{rem}

\subsection{Tiling equivalence relations and groupoids}
\label{bratteli09.ssect-gpd}

Let $\Omega$ be the tiling space of an aperiodic, repetitive, and FLC tiling of $\RM^d$, and let $\Xi$ be its canonical transversal.

\begin{defini}
\label{bratteli09.def-ERhull}
The equivalence relation $R_\Omega$ of the tiling space is the set
\begin{equation}
\label{bratteli09.eq-ERhull}
R_\Omega = \bigl\{ \
(T,T') \in \Omega \times \Omega \ : \ \exists a \in \RM^d\,, \; T' = T + a \
\bigr\}
\end{equation}
with the following topology: a sequence $(T_n, T'_n = T_n + a_n)$ converges to $(T,T'=T+a)$ if \( T_n \rightarrow T\) in $\Omega$ and \(a_n \rightarrow  a\) in $\RM^d$.

The equivalence relation of the transversal is the restriction of $R_\Omega$ to $\Xi$: 
\begin{equation}
\label{bratteli09.eq-ERtrans}
R_\Xi = \bigl\{
(T,T') \in \Xi \times \Xi \ : \ \exists a \in \RM^d\,, \; T' = T + a \
\bigr\}
\end{equation}
\end{defini}

\noindent Note that the equivalence relations are {\em not} endowed with the relative topology of \(R_\Omega \subset \Omega \times \Omega\) and \(R_\Xi \subset \Xi \times \Xi\). For example, by repetitivity, for $a$ large, $T$ and $T+a$ might be close to each other in $\Omega$, so that $(T,T+a)$ is close to $(T,T)$ for the relative topology, but not for that from $\Omega \times \RM^d$. The map \( (T,a) \mapsto (T, T+a)\) from  $\Omega \times \RM^d$ to $\Omega \times \Omega$ has a dense image, coinciding with $R_\Omega$ and is one-to-one because $\Omega$ is strongly aperiodic (contains no periodic points). The topology of $R_\Omega$ is the topology induced by this map.

\begin{defini}
\label{bratteli09.def-etale}
An equivalence relation $R$ on a compact metrizable space $X$ is called {\em \'etale} when the following holds.
\begin{enumerate}[(i)]

\item The set \(R^2 = \{((x,y),(y,z)) \in R\times R \}\) is closed in $R\times R$ and the maps sending $((x,y),(y,z))$ in $R\times R$ to $(x,y)$ in $R$, $(y,z)$ in $R$, and $(x,z)$ in $R$ are continuous.

\item The diagonal \( \Delta(R)=\{ (x,x) : x\in X\}\) is open in $R$.

\item The range and source maps \( r,s : R\rightarrow X\) given by \(r(x,y)=x, s(x,y)=y\), are open and are local homeomorphisms.
\end{enumerate}
A set $0\subset R$ is called an {\em $R$-set}, if $O$ is open in $R$, and $r\vert_O$ and $s\vert_O$ are homeomorphisms.
\end{defini}

It is proved in \cite{Ke95} that $R_\Xi$ is an \'etale equivalence relation.

A {\em groupoid} \cite{Re80} is a small category (the collections of objects and morphisms are sets) with invertible morphisms.
A topological groupoid, is a groupoid $G$ whose sets of objects $G^0$ and morphisms $G$ are topological spaces, and such that the composition of morphisms \( G\times G \rightarrow G\), the inverse of morphisms \(G\rightarrow G\), and the source and range maps \(G\rightarrow G^0\) are all continuous maps.

Given an equivalence relation $R$ on a topological space $X$, there is a natural topological groupoid $G$ associated with $R$, with objects $G^0=X$, and morphisms $G=\{ (x,x') : x\sim_R x'\}$.
The topology of $G$ is then inherited by that of $R$.

\begin{defini}
\label{bratteli09.def-gpd}
The {\em groupoid of the tiling space} is the groupoid of $R_\Xi$, with set of objects $\Gamma_\Xi^0 = \Xi$ and morphisms
\begin{equation}
\label{bratteli09.eq-gpd}
\Gamma_\Xi = \bigl\{ (T,a) \in \Xi \times \RM^d \ : \ T+a \in \Xi \bigr\}\,.
\end{equation}
\end{defini}

There is also a notion of \'etale groupoids \cite{Re80}.
Essentially, this means that the range and source maps are local homeomorphisms.
It can be shown that $\Gamma_\Xi$ is an \'etale groupoid \cite{Ke95}.

\subsection{Approximation of tiling spaces}
\label{bratteli09.ssect-approxtil}

Let $T$ be an aperiodic, repetitive and FLC (punctured) tiling of $\RM^d$.
Since $T$ has finitely many tiles up to translation, there exists $R>r>0$, such that the minimum of the inner radii of its tiles equals $r$, and the maximum of the outer radii equals $R$. The set $T^\punc$ of punctures of tiles of $T$ is a {\em Delone set} (or $(r,R)$-Delone set).
That is $T^\punc$ is {\em uniformly discrete} (or $r$-uniformly discrete): for any $x\in \RM^d$, \(B(x,r) \cap T^\punc\) contains at most a point, and {\em relatively dense} (or $R$-relatively dense): for any $x\in \RM^d$, \(B(x,R) \cap T^\punc\) contains at least a point.

\begin{rem}
\label{bratteli09.rem-delone}
{\em Properties of aperiodicity, repetitivity, and FLC can be defined for Delone sets as well. See \cite{BHZ00,La99A,La99B,LP03} for instance.
The study of Delone sets is equivalent to that of tilings with the same properties. Any tiling $T$ defines the Delone set $T^\punc$.
Conversely, a Delone set $\Ll$ defines its {\em Voronoi tiling} as follows.
Let 
\[
v_x = \bigl\{ y \in \RM^d \ : \ |y-x| \le |z-x|\,, \forall z \in \Ll \bigr\}
\]
be the {\em Voronoi tile} at $x \in\Ll$ (it is a closed and convex polytope).
The Voronoi tiling $V(\Ll)$ is the tiling with tiles \( v_x, x \in \Ll\).
}
\end{rem}

\begin{defini}
\label{bratteli09.def-ctile}
Let $T$ be a tiling such that $T^\punc$ is $r$-uniformly discrete for some $r>0$.
\begin{enumerate}[(i)]

\item The {\em collar} of a tile $t \in T$ is the patch
\(
\col(t) = \bigl\{
t'\in T \ : \ \dist(t,t')\le r
\bigr\} \,.
\)

\item A {\em prototile} is an equivalence class of tiles under translation.
A {\em collared prototile} is the subclass of a prototile whose representatives have the same collar up to translation. Then $\ts(T)$ and $\ts_c(T)$ will denote the set of prototiles and collared prototiles of $T$ respectively.

\item The {\em support} $\supp [t]$ of a prototile $[t]$ is the set $t'-a'$ with $t'$ a representant in the class of $[t]$, and $a'\in\RM^d$ is the vector joining the origin to the puncture of $t'$. As can be checked easily, the set $t'-a'$ does not depend upon which tile $t'$ is chosen in $[t]$. Hence $\supp [t]$ is a set obtained from any tile in $[t]$ by a translation with puncture at the origin. The support of a collared prototile is the support of the tiles it contains.
\end{enumerate}
\end{defini}

A collared prototile is a prototile where a local configuration of its representatives has been specified: each representative has the same neighboring tiles up to translation. In general, $\col(t)$ may contain more tiles than just the ones intersecting $t$, as illustrated in Figure \ref{bratteli09.fig-collar}.

\begin{figure}[!ht]
  \begin{center}
  \includegraphics[width=9cm]{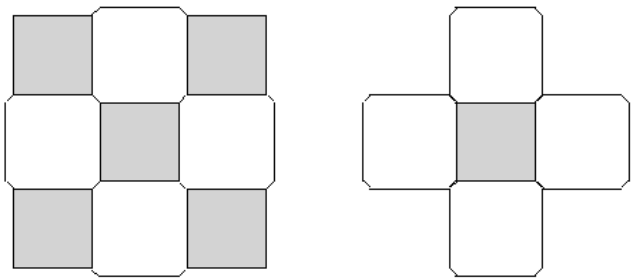}
  \end{center}
\caption{\small{$\col(t)$ {\it vs}  patch of tiles intersecting $t$.}}
\label{bratteli09.fig-collar}
\end{figure}

Since $T$ is FLC, it follows that its has only a finite number of collared prototiles. Moreover, any tiling in its hull have the same set of collared prototiles.


The previous description allows to represent a tiling $T$ in a more combinatorial way. Given a prototile $\htt$ (collared or not), let $T^\punc(\htt)$ be the subset of $T^\punc$ made of punctures corresponding to tiles in $\htt$ (it should be noted that it is also an aperiodic repetitive FLC Delone set). 

\begin{defini}
\label{bratteli09.def-combtil}
The set $I(T)$ of pairs $i=(a,\htt)$, where $\htt\in\ts(T)$ is a prototile and $a\in T^\punc(\htt)$, will be called the combinatorial representation of $T$. Similarly, the set $I_c(T)$ of pairs $i=(a,\htt)$, where $\htt\in\ts_c(T)$ is a collared prototile and $a\in T^\punc(\htt)$, will be called the combinatorial collared representation of $T$. 
\end{defini}

There is a one-to-one correspondence between the family of tiles of $T$ and $I_T$, namely, with each tile $t\in T$ is associated the pair $i(t)= (a(t),[t])$ where $a(t)$ is the puncture of $t$. By construction this map is one-to-one. Conversely $t_i$ will denote the tile corresponding to $i=(a,\htt)$. 

Thanks to the Hypothesis~\ref{tLap09.hypo-puncCW}, the tiles of $T$ are finite and compatible $CW$-complexes. The next definition is a reformulation of the Anderson--Putnam space \cite{AP98} that some of the authors gave in \cite{SB09}.
\begin{defini}
\label{bratteli09.def-AP}
Let \(\hat{t}_i, i = 1, \cdots p\) be the prototiles of $T$. Let $t_i$ be the support of $\hat{t}_i$. The {\em prototile space} of $T$ is the quotient $CW$-complex
\[
K(T) = \bigsqcup_{i=1}^n t_i / \sim \,,
\]
where two $k$-cells $c \in t_i^k$ and  $c' \in t_j^k$ are identified if there exists $u, u' \in \RM^d$ for which $t_i + u, t_j + u' \in T$, with  \(c + u = c' + u'\).

The {\em collared prototile space} $K^c(T)$ is built similarly out of the collared prototiles of $T$.
\end{defini}

\begin{proposi} 
\label{bratteli09.prop-projhull}
There is a continuous map \( \kappa^{(c)} : \Omega \rightarrow K^{(c)}(T) \).
\end{proposi}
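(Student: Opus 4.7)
The map $\kappa^{(c)}$ should read off, at each tiling $T'\in\Omega$, the position of the origin inside whichever tile of $T'$ contains it, viewed as a point of the corresponding (collared) prototile. I will construct it explicitly, show it is well-defined thanks to the quotient in Definition~\ref{bratteli09.def-AP}, and then verify continuity using the local nature of the topology on $\Omega$.

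\textbf{Construction.} Given $T'\in\Omega$, the support of $T'$ is all of $\RM^d$, so there exists at least one tile $t'\in T'$ with $0\in t'$. Writing $t'=t_i+a$ (with $t_i$ the support of a prototile $\hat t_i$ and $a$ the puncture of $t'$), the origin corresponds to the point $-a\in t_i$. Define $\kappa(T')=[-a]\in K(T)$, where $[-a]$ denotes the class under the identifications of Definition~\ref{bratteli09.def-AP}. The collared version is defined identically after replacing $\hat t_i$ with its collared refinement, which is legitimate because repetitivity ensures $T'$ and $T$ share the same set of collared prototiles (cf.\ Theorem~\ref{bratteli09.thm-hull}(ii) and the discussion after Definition~\ref{bratteli09.def-ctile}).

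\textbf{Well-definedness.} Two sources of non-uniqueness must be controlled: (a) the origin may lie in several tiles $t'_1,\dots,t'_k$ of $T'$ when it lies on a lower-dimensional cell of the $CW$-decomposition; (b) the choice of prototile representative within each class. For (b) the support $\supp\hat t_i$ is intrinsic, so the point $-a$ is unambiguous. For (a), write $t'_j=t_{i_j}+a_j$ and let $c\subset\bigcap_j t'_j$ be the cell at the origin; then $c=c_j+a_j$ for a cell $c_j\subset t_{i_j}$, and the points $-a_j\in c_j$ all represent the origin. By minimality (Theorem~\ref{bratteli09.thm-hull}(ii)), the patch $\{t'_1,\dots,t'_k\}$ occurs in $T$, i.e.\ there is $u\in\RM^d$ such that $t_{i_j}+(a_j+u)\in T$ for all $j$, sharing the common cell $c+u$. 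The identifications in Definition~\ref{bratteli09.def-AP} apply verbatim and collapse the $-a_j$ to a single class in $K(T)$.

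\textbf{Continuity.} Fix $T'\in\Omega$ and let $\rho>0$ be larger than twice the maximum outer radius $R$ of the tiles. Choose $O=B(0,\rho)$ and any $\eps<r$ (the minimum inner radius). If $T'_n\in U_{O,\eps}(T')$ then there are $x_n,y_n\in B(0,\eps)$ with $(T'\cap O)+x_n=(T'_n\cap O)+y_n$, so the tiles of $T'_n$ meeting $B(0,\rho-\eps)$ are translates by $x_n-y_n$ of the corresponding tiles of $T'$ (and the same holds for their collars when $\rho$ is enlarged to include the collar neighborhood). Pick a tile $t'_n\in T'_n$ containing the origin: either $t'_n+y_n-x_n$ equals some $t'_j\in T'$ that also contains the origin, or the origin of $T'_n$ has moved a distance $|x_n-y_n|<2\eps$ across a boundary cell inside a common patch. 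In either case $-a_n\in t_{i_n}$ satisfies $-a_n=-a_j+(x_n-y_n)$ for some $j$ with $-a_j$ also representing $\kappa(T')$. As $\eps\downarrow 0$ we get $[-a_n]\to\kappa(T')$ in $K^{(c)}(T)$, using that the quotient map $\bigsqcup_i t_i\twoheadrightarrow K^{(c)}(T)$ is continuous and identifies all representatives of $\kappa(T')$ on the boundary.

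\textbf{Expected obstacle.} The only genuinely delicate step is the boundary case in continuity: when the origin in $T'$ lies on a shared face, the sequence $T'_n$ may place the origin in different adjacent tiles, producing representatives in different cells of different prototiles. The argument above rests on the fact that all such representatives are collapsed by the quotient and lie within a neighborhood of size $O(\eps)$ of the common boundary cell; this is what makes the construction continuous rather than merely Borel measurable.
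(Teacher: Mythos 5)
Your proposal is correct and follows essentially the same route as the paper's proof: define $\kappa^{(c)}$ by reading off the position of the origin inside the tile(s) containing it, use the identifications in the quotient defining $K^{(c)}(T)$ to handle the boundary ambiguity, and verify continuity from the local nature of the topology on $\Omega$. The only cosmetic difference is that the paper checks continuity by showing preimages of open sets are open, while you argue sequentially via the basic neighborhoods $U_{O,\eps}$; these amount to the same thing here.
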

\begin{proof} Let \( \lambda :  \sqcup \; t_j \rightarrow K^{(c)}(T) \) be the quotient map. And let \( \rho : \Omega \times \RM^d \rightarrow \sqcup \; t_j \) be defined as follows. If $a$ belongs to the intersection of $k$ tiles \(t^{\alpha_1}, \cdots t^{\alpha_k}\), in a tiling $T'\in\Omega$, with \(t^{\alpha_l} = t_{j_l} + u_{\alpha_l}(T'), \, l=1, \cdots k\), then
the point $a-u_{\alpha_l}(T')$ belongs to $t_{j_l}$. Moreover, all these points are identified after taking the quotient, namely $\lambda(a-u_{\alpha_l}(T'))=\lambda(a-u_{\alpha_m}(T'))$ if $1\leq l,m\leq k$. Therefore $\kappa(T',a) = \lambda(a-u_{\alpha_l}(T'))$ is well defined. This allows to set $\kappa^{(c)}(T') = \kappa(T',0_{\RM^d})$.
This map sends the origin of $\RM^d$, that lies in some tile of $T'$, to the corresponding tile $t_j$'s at the corresponding position.

In \(K^{(c)}(T)\), points on the boundaries of two tiles $t_i$ and $t_j$ are identified if there are neighboring copies of the tiles $t_i, t_j$ somewhere in $T$ such that the two associated points match.
This ensures that the map $\kappa^{(c)}$ is well defined, for if in $\RM^d$ tiled by $T'$, the origin belongs to the boundaries of some tiles, then the corresponding points in \(\sqcup \; t_j\) given by \(\rho (T, 0_{\RM^d})\) are identified by $\lambda$.

Let $a$ be a point in \(K^{(c)}(T)\), and $U_a$ and open neighborhood of $a$. Say $a$ belongs to the intersection of some tiles \(t_{j_1}, \cdots t_{j_k}\). Let $T'$ be a preimage of $a$: \(\kappa^{(c)} (T') = a\).
The preimage of $U_a$ is the set of tilings for which the origin lies in some neighborhood of tiles that are translates of \( t_{j_1}, \cdots t_{j_k}\), and this defines a neighborhood of $T'$ in the $\Omega$.
Therefore \(\kappa^{(c)}\) is continuous. 
\end{proof}

A {\em nested sequence of tilings} $(T_n)_n$, is a countable infinite sequence of tilings such that $T_1= T$ and for all $n \ge 2$ the tiles of $T_n$ are (supports of) patches of $T_{n-1}$. Without loss of generality, it will be assumed that $T \in \Xi$, {\it i.e.} has a puncture at the origin. To built such a sequence, the following procedure will be followed: assume that the tiling $T_{n-1}$ has been constructed and is aperiodic, repetitive and FLC with one puncture at the origin; then

\begin{enumerate}[(i)]

\item let \(p_n \subset T_{n-1}\) be a finite patch with a puncture at the origin, and let \(\Ll_n = \{ u \in \RM^d \ : \ p_n + u \subset T_{n-1}\}\)  be the Delone set of the punctures of the translated copies of $p_n$ within $T_{n-1}$;

\item let $V(\Ll_n)$ be the Voronoi tiling of $\Ll_n$ (Remark \ref{bratteli09.rem-delone}); to each tile of $T_{n-1}$ a tile $v\in V(\Ll_n)$ will be assigned (see the precise definition below);

\item for each tile $v\in V(\Ll_n)$, let $t^{(n)}_v$ be the union of the tiles of $T_{n-1}$  that have been assigned to it; then $T_n$ is defined as the tiling \(\{ t^{(n)}_v \}_{v \in V(\Ll_n)}\).

\end{enumerate}

It is worth remarking that $\Ll_n$ is an aperiodic, repetitive, FLC, Delone set as well. Thus the Voronoi tiling inherits these properties. 
Note however that in point (iii), two tiles $t_v^(n)$ may have the same shape. However,
if they correspond to different patches of $T_{n-1}$, they should be labeled as
different.

The second step of the construction above needs clarification since the tiles of  $V(\Ll_n)$ are not patches of $T_{n-1}$, but convex polytopes built out of $\Ll_n$ (see Remark \ref{bratteli09.rem-delone}).

First the set of patches of $T$ is countable, and second
for any patch $p\subset T$ the Voronoi tiling of \(\Ll_p = \{ u \in \RM^d \ : \ p + u \subset T_{n-1}\}\) has FLC, thus has finitely many prototiles.
Hence there is a vector $u\in\RM^d$ which is not in the span of any of the subspaces generated by the faces of the tiles $\Ll_p$ for all $p$.

A point $x\in \RM^d$ is called $u$-interior to a closed set $X\subset \RM^d$, and write \( x \stackrel{u}{\in} X\), if \( \exists \delta>0, \forall \eps \in (0,\delta), x+\eps u \in X \). Given a Voronoi tile $v\in V(\Ll_n)$ the patch associated with it is defined by (here $a(t)$ denotes the puncture of $t$)
\[
t_v^{(n)} = \cup \{ t \in T_{n-1} \ ; \ a(t) \stackrel{u}{\in} v \} \,.
\]
This gives an unambiguous assignment of tiles of $T_{n-1}$ to tiles of $T_n$.

There is therefore a natural subdivision of each tile of $T_n$ into tiles of $T_{n-1}$. Let $I_n=I(T_n)$ be the combinatorial representation of $T_n$ (see Definition~\ref{bratteli09.def-combtil}). There is a map $l_n : I_{n-1} \rightarrow I_n$, describing how to assign a tile in $T_{n-1}$ to the tile of $T_n$, namely to a patch of $T_{n-1}$ it belongs to. The ``inverse map'' defines a {\em substitution}, denoted by $\sigma_n$, namely a map from the set of tiles of $T_n$ to the set of patches of $T_{n-1}$ defined by 

$$\sigma_n(t_i^{(n)})= \{t_j^{(n-1)}\,;\, l_n(j)=i\}
$$

\noindent Such a substitution defines a map, also denoted by $\sigma_n$, from the $CW$-complex $T_n$ onto the $CW$-complex $T_{n-1}$, if the tiles $t_i^{(n)}$ are given the $CW$-complex structure inherited by the ones of the unions of the $t_j^{(n-1)}$ for $l_n(j)=i$.
In much the same way, this gives a map on the prototile space, and thus on the Anderson--Putnam complex as well.
A similar construction holds if tiles and prototiles are replaced by collared tiles and collared prototiles.
It leads to a canonical map $\sigma_n: K^c(T_n) \rightarrow K^c(T_{n-1})$, also denoted by $\sigma_n$. 

Let us denote by $r_{n}>0$ be the minimum of the inner radii of the tiles of $T_{n}$, and $R_n>0$ the maximum of the outer radii.
\begin{defini}
\label{bratteli09.def-nested}
A nested sequence of tilings $(T_n)_{n\in\NM}$ with substitution maps \(\sigma_n : T_n \mapsto T_{n-1}\) is called a {\em proper nested sequence}, if for all $n$ the tiling $T_n$ is aperiodic, repetitive, and has FLC, and the following holds for all $n\ge 2$:
\begin{enumerate}[(i)]

\item for each tile $t_n \in T_n$, $\sigma_n(t_n)$ is a patch of $T_{n-1}$,

\item for each tile $t_n \in T_n$, $\sigma_n(t_n)$ contains a tile of $T_{n-1}$ in the interior of its support,

\item there exists $\rho>0$ (independent of $n$), such that \(r_n>r_{n-1} +\rho\) and \(R_n>R_{n-1}+\rho\).

\end{enumerate}
\end{defini}
The nested sequence constructed above can be made proper by choosing, for each $n$, the patch $p_n$ to be large enough.

These conditions are sufficient to ensure that $K^c(T_n)$ is {\em zoomed out} of $K^c(T_{n-1})$ for all $n\ge 2$, in the terminology of \cite{BBG06} (Definition 2.43). The map  \(\sigma_n : K^c(T_n) \rightarrow K^c(T_{n-1})\) also {\em forces the border} (see \cite{Ke97}, Definition 15) because it is defined on the {\em collared} prototile spaces. This suffices to recover the tiling space from the sequence $\bigl( K^c(T_n) \bigr)_{n\in\NM}$.

\begin{theo}
\label{bratteli09.thm-isohull}
Let  $(T_n)_n$ be a proper nested sequence of tilings.
Then the tiling space $\Omega$ of $T_1$ is homeomorphic to the inverse limit of the complexes $K^c(T_n)$:
\[
\Omega \cong \varprojlim_{n\in\NM} \bigl( K^c(T_n), \sigma_n \bigr) \,.
\]
\end{theo}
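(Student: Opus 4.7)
The plan is to build, for each $n$, a continuous map $\kappa_n^c : \Omega \to K^c(T_n)$ that commutes with the substitution maps $\sigma_n$, and then use the universal property of the inverse limit to obtain a continuous map $\Phi : \Omega \to \varprojlim(K^c(T_n),\sigma_n)$. Since $\Omega$ is compact and the inverse limit is Hausdorff, it is then enough to show that $\Phi$ is bijective: a continuous bijection from a compact space to a Hausdorff space is automatically a homeomorphism.

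To build $\kappa_n^c$, I would apply Proposition \ref{bratteli09.prop-projhull} with $T_n$ in place of $T$, giving a continuous map $\Omega_n \to K^c(T_n)$, where $\Omega_n$ is the hull of $T_n$. Because the sequence is nested, every tiling $T' \in \Omega = \Omega_1$ canonically regroups to a tiling in $\Omega_n$ by collecting its $T_1$-tiles into the $T_n$-patches to which they belong, and this regrouping is continuous. Composition yields $\kappa_n^c$. The identity $\sigma_n \circ \kappa_n^c = \kappa_{n-1}^c$ holds by construction, since $\sigma_n$ is precisely the map recording how level-$(n-1)$ tiles are arranged inside level-$n$ tiles, which produces the desired $\Phi$.

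For injectivity, suppose $\Phi(T') = \Phi(T'')$. Then for every $n$ the origin of $\RM^d$ sits at the same point of the same collared $T_n$-tile in both tilings, so $T'$ and $T''$ carry the same collared patch around $0$ at every level. Condition (iii) of Definition \ref{bratteli09.def-nested} forces $r_n \to \infty$, so $T'$ and $T''$ agree on balls of arbitrarily large radius around the origin, whence $T' = T''$. For surjectivity, starting from a compatible sequence $(x_n) \in \varprojlim K^c(T_n)$, I would lift each $x_n$ to a marked interior point of some collared $T_n$-tile; the compatibility with $\sigma_n$ ensures that the level-$(n-1)$ mark lies inside the patch $\sigma_n$ of the level-$n$ mark. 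I would then assemble these nested patches, now regarded as patches of $T_1$, into a partial tiling around the origin, and use the forcing-the-border property of the collared complex together with $r_n \to \infty$ to show that this partial tiling extends uniquely to a full tiling $T' \in \Omega$ satisfying $\kappa_n^c(T') = x_n$ for every $n$.

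The main obstacle is the surjectivity step: a priori the data $x_n$ only identifies the level-$n$ tile containing the origin, and without the collaring one would not know how that tile fits into its level-$n$ neighbourhood, so the partial tilings could fail to exhaust $\RM^d$ coherently. The whole reason for working with $K^c(T_n)$ rather than the uncollared prototile space is that the collar of each prototile forces its border in the sense of \cite{Ke97}, so the collared patches accumulated from $(x_n)$ cover balls of unbounded radius around the origin; this is the content of the ``zoom-out'' criterion of \cite{BBG06} (Definition 2.43) built into Definition \ref{bratteli09.def-nested}(iii). Once this is established the bijectivity of $\Phi$, hence the homeomorphism, follows.
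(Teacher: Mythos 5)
Your proposal is correct and follows essentially the route the paper itself indicates: the paper gives no standalone proof of this theorem, citing \cite{BBG06} and inviting the reader to adapt the proof of Theorem \ref{bratteli09.thm-isotrans}, and your argument is precisely that adaptation — the maps $\kappa_n^c$ of Proposition \ref{bratteli09.prop-projhulln} induce $\Phi$, injectivity comes from the collared patches containing balls of radius $r_n \to \infty$, surjectivity from assembling the nested collared patches using border-forcing, and the homeomorphism from the continuous-bijection-from-compact-to-Hausdorff argument (the same device used at the end of the proof of Theorem \ref{bratteli09.thm-isotrans}). There is nothing substantive to add.
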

This Theorem was first proved in \cite{BBG06} and it was also proved that the $\RM^d$-action could also be recovered from this construction (see also \cite{SB09} (Theorem 5)). The reader can also easily adapt the proof of Theorem \ref{bratteli09.thm-isotrans} given in section \ref{bratteli09.ssect-brat} to deduce Theorem \ref{bratteli09.thm-isohull}.

For each $n$, we can see the tiling $T_n$ as a ``subtiling'' of $T_1$.
The map  \(\kappa^{(c)} : \Omega \rightarrow K^{(c)}(T_1)\) of Proposition \ref{bratteli09.prop-projhull} can be immediately adapted to a map onto \(K^{(c)}(T_n)\) for each $n$.
\begin{proposi} 
\label{bratteli09.prop-projhulln}
There is a continuous map \(\kappa_n^{(c)} : \Omega \rightarrow K^{(c)}(T_n)\).
\end{proposi}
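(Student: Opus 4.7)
The strategy is to reduce to Proposition \ref{bratteli09.prop-projhull} applied at level $n$. The key observation is that the construction of $T_n$ from $T_1$ via the Voronoi tiling of $\Ll_n$ together with the $u$-interior assignment is entirely local: which $T_1$-tiles get grouped into a given $T_n$-tile depends only on the pattern of $T_1$ in a bounded neighborhood. Hence this rule extends to any tiling $T' \in \Omega$: for each $T' \in \Omega$ one can form a ``level-$n$ decomposition'' $T'^{(n)}$, where the tiles of $T'^{(n)}$ are unions of tiles of $T'$ patterned exactly like the tiles of $T_n$. The assignment $T' \mapsto T'^{(n)}$ is continuous, essentially because agreement of $T'$ and $T''$ on $B(0,1/\varepsilon)$ forces agreement of $T'^{(n)}$ and $T''^{(n)}$ on a ball of radius $1/\varepsilon - C_n$, where $C_n$ depends only on $R_n$ and the size of the patch $p_n$.

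Once this continuous projection is available, I would mimic verbatim the construction of $\kappa^{(c)}$ in the proof of Proposition \ref{bratteli09.prop-projhull}, but working with the prototiles of $T_n$ and the tiling $T'^{(n)}$ instead of those of $T_1$ and $T'$. Explicitly: let $\lambda_n : \sqcup \, t_j^{(n)} \to K^{(c)}(T_n)$ be the quotient map defining $K^{(c)}(T_n)$. For $T' \in \Omega$, the origin $0_{\RM^d}$ lies in some (possibly several, if on a boundary) $T_n$-tiles of $T'^{(n)}$, each of which is a translate $t_j^{(n)} + u_\alpha$ of a prototile; set
\[
\kappa^{(c)}_n(T') = \lambda_n\bigl(0_{\RM^d} - u_\alpha\bigr).
\]
Independence of the choice of $\alpha$ on boundary points of $T'^{(n)}$ is ensured by the very definition of $K^{(c)}(T_n)$: boundary points of neighboring $T_n$-tiles are identified in $K^{(c)}(T_n)$ whenever the corresponding patches occur adjacently in $T_n$, which by repetitivity and minimality of the hull $\Omega$ is exactly what happens in $T'^{(n)}$.

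Continuity of $\kappa_n^{(c)}$ then follows in exactly the same way as in Proposition \ref{bratteli09.prop-projhull}: given an open neighborhood $U_a$ of a point $a = \kappa_n^{(c)}(T')$, its preimage under $\lambda_n$ is a neighborhood of $0_{\RM^d} - u_\alpha$ inside the relevant prototile supports; this in turn corresponds to the set of tilings in $\Omega$ whose level-$n$ decompositions place the origin in a slightly perturbed position inside a translate of one of these prototiles, which is open in $\Omega$ by continuity of $T' \mapsto T'^{(n)}$.

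\textbf{Main obstacle.} The only delicate point is establishing that the level-$n$ decomposition extends continuously from the dense orbit $T_1 + \RM^d$ to all of $\Omega$. This is where the genericity of the direction $u$ used in the $u$-interior construction matters: it guarantees that the assignment of $T_1$-tiles to $T_n$-tiles is stable under small perturbations and does not suffer from ambiguities on the faces of the Voronoi cells of $\Ll_n$. Once this is settled, the remainder of the argument is a straightforward replication of the proof of Proposition \ref{bratteli09.prop-projhull}.
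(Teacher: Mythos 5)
Your proposal is correct and follows essentially the same route as the paper: the paper's (much terser) proof simply asserts that the origin of any $T'\in\Omega$ lies in a patch that is a translate of $\sigma_1\circ\cdots\circ\sigma_n(t_i^{(n)})$ for some $i\in I_n$, and sends $T'$ to the corresponding point of $K^{(c)}(T_n)$ — exactly your level-$n$ decomposition followed by the quotient map. You merely make explicit the locality and well-definedness arguments that the paper leaves implicit.
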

\begin{proof}
Let $T\in\Omega$. The origin of $\RM^d$ lies in some patch $p_i^{n} \subset T$ which is a translate of $\sigma_1 \circ \sigma_2 \circ \cdots \circ \sigma_n (t_i^{(n)})$ for some $i\in I_n$. We set \(\kappa_n^{(c)}\) to be the corresponding point in image of $t_i^{(n)}$ in $K^{(c)}(T_n)$.
\end{proof}

\section{Tiling groupoids and Bratteli diagrams}
\label{bratteli09.sect-gpdbrat}

\subsection{Bratteli diagrams associated with a tiling space}
\label{bratteli09.ssect-brat}

Bratteli diagrams were introduced in the seventies for the classification of $AF$-algebras \cite{Bra72}.
They were then adapted, in the topological setting, to encode $\ZM$-%
actions on the Cantor set \cite{For97}.
A specific case, close from our present concern, is the action of $\ZM$ by the shift on some closed, stable, and minimal subset of $\{0,1\}^\ZM$ (this is the one-dimensional symbolic analog of a tiling).
Then, Bratteli diagrams were used to represent $\ZM^2$ \cite{GMPSa08}, and recently $\ZM^d$ \cite{GMPSb08} Cantor dynamical systems, or to represent the transversals of substitution tiling spaces \cite{DHS99}.

\begin{defini}
\label{bratteli09.def-Bdiag}
A {\em Bratteli diagram} is an infinite directed graph \(\Bb=(\Vv,\Ee,r,s)\) with sets of vertices $\Vv$ and edges $\Ee$ given by
\[
\Vv = \bigsqcup_{n \in \NM \cup \{ 0 \} } \Vv_n \,, \quad \Ee = \bigsqcup_{n \in \NM} \Ee_n \,,
\]
where the $\Vv_n$ and $\Ee_n$ are finite sets.
The set $\Vv_0$ consists of a single vertex, called the {\em root} of the diagram, and noted $\circ$.
The integer $n \in \NM$ is called the {\em generation index}.
And there are maps 
\[
s : \Ee_n \rightarrow \Vv_{n-1} \,, \quad r : \Ee_n \rightarrow \Vv_{n}\,,
\]
called the {\em source} and {\em range} maps respectively.
We assume that for all $n \in \NM$ and $v \in \Vv_n$ one has \(s^{-1} (v) \ne \emptyset\) and \(r^{-1}(v) \ne \emptyset\) ({\em regularity}).

We call $\Bb$ {\em stationary}, if for all $n\in \NM$, the sets $\Vv_n$ are pairwise isomorphic and the sets $\Ee_n$ are pairwise isomorphic.
\end{defini}

Regularity means that there are there are no ``sinks'' in the diagram, and no ``sources'' apart from the root.

A Bratteli diagram can be endowed with a {\em label}, that is a map $l: \Ee \rightarrow S$ to some set $S$.

\begin{defini}
\label{bratteli09.def-paths}
Let $\Bb$ be a Bratteli diagram.
\begin{enumerate}[(i)]

\item A {\em path} in $\Bb$ is a sequence of composable edges \((e_n)_{1\le n \le m}, m\in \NM\cup\{+\infty\}\), with $e_n \in \Ee_n$ and $r( e_n) = s( e_{n+1})$.

\item If $m$ is finite, $\gamma=(e_n)_{n\le m}$, is called a {\em finite path}, and $m$ the length of $\gamma$.
We denote by $\Pi_m$ the set of paths of length $m$.

\item If $m$ is infinite, $x=(e_n)_{n\in\NM}$, is called an {\em infinite path}.
We denote by $\partial \Bb$ the set of infinite paths.

\item We extend the range map to finite paths: if \(\gamma = (e_n)_{n \le m}\) we set $s(\gamma) := s(e_m)$.
\end{enumerate}
\end{defini}

In addition to Definition \ref{bratteli09.def-Bdiag}, we ask that a Bratteli diagram satisfies the following condition.
\begin{hypo}
\label{bratteli09.hypo-split}
For all $v \in \Vv$, there are at least two distinct infinite paths through $v$.
\end{hypo}

The set $\partial \Bb$ is called the {\em boundary} of $\Bb$.
It has a natural topology inherited from the product topology on $\prod_{i=0}^{+\infty}{\Ee_i}$, which makes it a compact and totally disconnected set.
A base of neighborhoods is given by the following sets:
\[
[\gamma] = \{ x \in \partial \Bb \ ; \ \gamma \text{ is a prefix of } x\}.
\]

Hypothesis~\ref{bratteli09.hypo-split} is the required condition to make sure that there are no isolated points.
This implies the following.
\begin{proposi}
With this topology, $\partial \Bb$ is a Cantor set.
\end{proposi}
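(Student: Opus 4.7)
The plan is to invoke Brouwer's characterization of the Cantor set: any nonempty, compact, metrizable, totally disconnected space without isolated points is homeomorphic to the standard Cantor set. Compactness and total disconnectedness were already recorded in the preamble to the statement, as inherited from the product topology on $\prod_n \Ee_n$ (a countable product of finite discrete sets) via the observation that the composability condition $r(e_n)=s(e_{n+1})$ cuts out a closed subset. Metrizability is automatic from the same inclusion, since a countable product of finite discrete spaces is metrizable. Nonemptiness follows from the regularity assumption $s^{-1}(v)\neq\emptyset$: starting at the root $\circ$ and recursively choosing an outgoing edge from each successive vertex, one builds at least one element of $\partial\Bb$.

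The only remaining content is to verify that $\partial\Bb$ has no isolated points. I would fix an arbitrary $x=(e_n)_{n\in\NM}\in\partial\Bb$ together with an arbitrary basic clopen neighborhood $[\gamma]$ of $x$, where $\gamma=(e_1,\dots,e_m)$ is the length-$m$ prefix of $x$; since the $[\gamma]$ form a neighborhood base, it suffices to exhibit another point of $[\gamma]$. Set $v:=r(e_m)\in\Vv_m$ (and $v:=\circ$ when $m=0$). Hypothesis~\ref{bratteli09.hypo-split} provides two distinct infinite paths through $v$; from these one extracts two distinct infinite tails $\alpha,\beta$ starting at $v$. Concatenation then produces two distinct infinite paths $\gamma\cdot\alpha,\gamma\cdot\beta$ lying in $[\gamma]$, and at least one of them differs from $x$, so $x$ is not isolated.

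The step requiring genuine care is the extraction of two distinct forward tails from Hypothesis~\ref{bratteli09.hypo-split}. Read too loosely, the hypothesis could be satisfied by two infinite paths coinciding from $v$ onward and differing only in their root-to-$v$ portion, which would not suffice. The intended reading, implicit in the fact that the proposition is asserted without qualification and consistent with the role this hypothesis plays in guaranteeing the absence of isolated points, is that for every vertex $v$ the subtree of forward infinite paths starting at $v$ has at least two branches. With this interpretation, the concatenation argument goes through, and Brouwer's theorem delivers the conclusion. The verification of metrizability, compactness, and total disconnectedness is entirely routine; the only genuine mathematical input is the correct use of the splitting hypothesis, which is where I would concentrate my attention.
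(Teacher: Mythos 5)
Your proof is correct and follows exactly the route the paper intends: the paper offers no explicit argument for this proposition beyond the preceding remarks that the product topology on $\prod_i \Ee_i$ makes $\partial\Bb$ compact and totally disconnected and that Hypothesis~\ref{bratteli09.hypo-split} is "the required condition to make sure that there are no isolated points," which is precisely what you verify (together with nonemptiness from regularity and metrizability) before invoking Brouwer's characterization. Your reading of Hypothesis~\ref{bratteli09.hypo-split} as requiring two distinct \emph{forward} tails from each vertex is indeed the intended one, and flagging it is warranted: under the literal reading one can build a regular diagram satisfying the hypothesis in which some vertex has a unique forward continuation, producing an isolated point, so the interpretive step you isolate is genuinely where the content lies.
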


We now build a Bratteli diagram associated with a proper nested sequence of tilings.
\begin{defini}
\label{bratteli09.def-tilbrat}
Let $(T_n)_{n\in\NM}$ be a proper nested sequence of tilings.
Let \(\hat{t}^{(n)}_{i}, i=1, \cdots p_m\), be the collared prototiles of $T_n$, and $t_{i}^{(n)}$ the representative of $\hat{t}_{i}^{(n)}$ that has its puncture at the origin.

The Bratteli diagram \(\Bb = (\Vv, \Ee,r,s,u)\) associated with $(T_n)_{n\in\NM}$ is given by the following:
\begin{enumerate}[(i)]

\item \(\Vv_0 = \{ \circ\}\), and \(\Vv_n = \{ t_{i}^{(n)}, i=1, \cdots p_n\}, n\in\NM\),

\item \(\Ee_1 \cong \Vv_1\): \( e\in \Ee_1\) if and only if \( s(e) = \circ\) and \( r(e) = t_{i}^{(1)}\),

\item  $e \in \Ee_n$ with \( s(e) = t_{i}^{(n-1)}\) and \( r(e) = t_{j}^{(n)}\), if and only if there exists $a \in \RM^d$ such that \( t_i^{(n-1)} + a \) is a tile of the patch \( \sigma_n (t_j^{(n)})\),

\item a label \( u : \Ee \rightarrow \RM^d\), with \(u(e)=  -a\) for \(e\in \Ee_{n\ge 2}\) (and $u=0$ on $\Ee_1$).
\end{enumerate}
\end{defini}

Figure \ref{bratteli09.fig-edge} illustrates condition (iii).
\begin{figure}[!ht]
  \begin{center}
  \includegraphics[width=8cm]{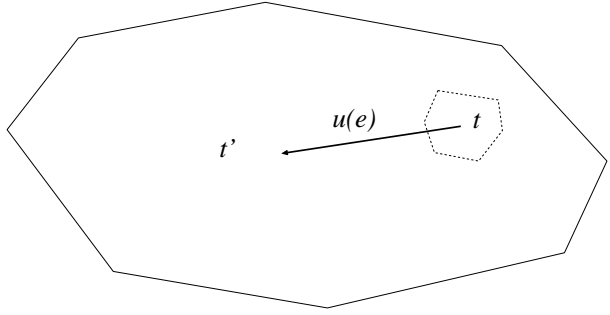}
  \end{center}
\caption{\small{Illustration of an edge $e\in\Ee$, with \(s(e)=t\) and \(r(e)=t'\).}}
\label{bratteli09.fig-edge}
\end{figure}

We extend the label as a map on finite paths \( u : \Pi \rightarrow \RM^d\):
for \(\gamma = (e_1, \cdots e_n)\in \Pi_n\) we set
\[
u(\gamma) = \sum_{i=1}^n u(e_i) \,.
\]
We can associate to each finite path $\gamma = (e_1, \cdots e_n)$ in $\Bb$, with 
\(s(\gamma)=t\), the patches of $T_1$
\begin{equation}
\label{bratteli09.eq-pathpatch}
p_\gamma = \sigma_1 \circ \sigma_2 \circ \cdots \sigma_n 
\bigl( t \bigr) + u(\gamma)\,, \quad \text{and} \quad
p^c_\gamma = \sigma_1 \circ \sigma_2 \circ \cdots \sigma_n 
\bigl( \col(t) \bigr) + u(\gamma)\,,
\end{equation}
where $p^c_\gamma$ is a ``collared patch'',  in the sense that it is the set of tiles of $T_1$ which make up the collar of a tile of $T_n$.

\begin{theo}
\label{bratteli09.thm-isotrans}
Let $\Xi$ be the transversal of the tiling space of $T_1$.
There is a canonical homeomorphism \( \varphi : \partial \Bb \rightarrow \Xi\).
\end{theo}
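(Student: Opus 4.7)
The plan is to build the homeomorphism $\varphi$ explicitly as follows. Given an infinite path $x = (e_n)_{n \in \NM}\in \partial\Bb$, let $\gamma_n = (e_1,\ldots,e_n)$ be its prefix of length $n$, and consider the nested sequence of collared patches $p^c_{\gamma_n}$ defined by \eqref{bratteli09.eq-pathpatch}. First I would check that these patches are nested: the vertex $r(e_n)$ is a tile $t^{(n)}$ of $T_n$, which under $\sigma_n$ expands into a patch of tiles of $T_{n-1}$ containing $s(e_{n+1}) + (\text{something determined by } u(e_{n+1}))$, and iterating the substitutions produces patches of $T_1$ that agree on their common domain. The labeling $u$ is chosen precisely so that the origin of $\RM^d$ lies in each $p_{\gamma_n}$, in fact at the puncture of the image of $s(e_1)$. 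Properness condition (iii) in Definition \ref{bratteli09.def-nested} ensures that $r_n \to \infty$, so the inner radii of the tiles of $T_n$ grow without bound; since the collar adds a further layer around each tile at level $n$, the patches $p^c_{\gamma_n}$ exhaust $\RM^d$. Set $\varphi(x) = \bigcup_n p^c_{\gamma_n}$, which is a tiling in $\Xi$ (it has a tile punctured at the origin because $e_1$ records such a tile of $T_1$).

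Next I would verify continuity. The sets $[\gamma]$ for $\gamma \in \Pi_n$ form a basis of clopen sets for $\partial \Bb$, while the sets $\Xi(p)$ of \eqref{bratteli09.eq-clopen} form a basis for $\Xi$. By construction $\varphi([\gamma_n]) \subset \Xi(p^c_{\gamma_n})$, because any path extending $\gamma_n$ produces a tiling containing $p^c_{\gamma_n}$ at the origin. Since the outer radius of $p^c_{\gamma_n}$ goes to infinity with $n$, these images shrink to single points in the sense of diameter in the $\delta$-metric, giving continuity.

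For injectivity, suppose $x \ne y$ and let $n$ be the first index where they differ, $e_n \ne e'_n$. Then $r(e_n)$ and $r(e'_n)$ are different collared tiles of $T_n$, so the collared patches $p^c_{\gamma_n}$ and $p^c_{\gamma'_n}$ sit at different positions relative to the origin, or are different collared patches altogether. The use of collared prototiles is essential here: as in \cite{BBG06,Ke97}, collaring forces the border, so distinct collared-tile choices at level $n$ cannot be reconciled by any continuation. Hence $\varphi(x) \ne \varphi(y)$.

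Surjectivity is where I expect the main work. Given $T' \in \Xi$, I would construct its preimage inductively. For each $n \ge 1$, the origin lies in a unique collared tile of $T'$ viewed at level $n$: concretely, apply the map $\kappa_n^c : \Omega \to K^c(T_n)$ of Proposition \ref{bratteli09.prop-projhulln} to pick out a collared prototile $t^{(n)} \in \Vv_n$ together with the translation vector from the puncture of $t^{(n)}$ to the origin. The consistency between generations, namely that the level-$n$ tile containing $0$ is obtained by $\sigma_n$ from the level-$(n-1)$ tile containing $0$, exactly gives a compatible sequence of edges $e_n \in \Ee_n$ with $s(e_n) = r(e_{n-1})$ and with the correct labels $u(e_n)$. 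The main obstacle is verifying that this assignment is well-defined and unambiguous when the origin lies on a boundary of tiles of some $T_n$; this is handled by the $u$-interior convention fixed just before Definition \ref{bratteli09.def-nested}, which makes the assignment $a(t) \stackrel{u}{\in} v$ unambiguous, so that at each level the containing collared tile is uniquely determined. This yields an infinite path $x$ with $\varphi(x) = T'$.

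Finally, $\partial \Bb$ is compact and $\Xi$ is Hausdorff, so a continuous bijection is automatically a homeomorphism, completing the proof.
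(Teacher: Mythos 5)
Your proposal is correct and follows essentially the same route as the paper: associate to each finite prefix $\gamma_n$ the collared patch $p^c_{\gamma_n}$, use $r_n\to\infty$ (Definition \ref{bratteli09.def-nested}(iii)) plus the collar to pin down a unique tiling, get injectivity from distinct patches at the origin, build the inverse via $\kappa_n^c$ of Proposition \ref{bratteli09.prop-projhulln}, and finish with the compact-to-Hausdorff argument. The only (cosmetic) difference is that you define $\varphi(x)$ as the increasing union $\bigcup_n p^c_{\gamma_n}$, whereas the paper takes the decreasing intersection $\bigcap_n \Xi(p^c_{\gamma_n})$ of clopen sets; both rest on the same nestedness and exhaustion facts.
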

\begin{proof}
Let \(x \in \partial \Bb\), and set \( \gamma_n = x \vert_{\Pi_n}\).
Define
\[
\varphi(x) = \bigcap_{n\in\NM} \Xi (p^c_{\gamma_n}) \,,
\]
where \( \Xi( p^c_{\gamma_n})\) is the clopen set of tilings that have the patch $p^c_{\gamma_n}$ at the origin (see equations \eqref{bratteli09.eq-clopen} and \eqref{bratteli09.eq-pathpatch}).
Since $\Xi$ is compact, and \( \Xi ( p^c_{\gamma_n} )\) is closed and contains \(\Xi ( p^c_{\gamma_{n+1}})\) for all $n$, by the finite intersection property $\varphi(x)$ is a closed and non-empty subset of $\Xi$.
Let us show it consists of a single tiling.
Let then \(T,T' \in \varphi(x)\).
For all $n$, \(p^c_{\gamma_n} \subset T,T'\).
And since $p^c_{\gamma_n}$ is a collared patch, it contains a ball of radius $r_n$ (see Definition \ref{bratteli09.def-nested}) centered at its puncture.
Using the metric of Remark \ref{bratteli09.rem-topohull} this implies that \(\delta(T,T') \le 1/r_n\) for all $n$.
By condition (iii) in Definition \ref{bratteli09.def-nested}, \(r_n\rightarrow \infty\) as \( n \rightarrow \infty\), therefore \(\delta(T,T')=0\) and $T=T'$.

If $x\ne x'$ then $\gamma_n \ne \gamma'_n$ for some $n$, thus \(\Xi ( p^c_{\gamma_n} ) \cup \Xi ( p^c_{\gamma'_n}) = \emptyset\) and thus \( \varphi(x) \ne \varphi(x')\).
This proves that $\varphi$ is injective.

To prove that it is onto we exhibit an inverse.
Let $T\in \Xi$.
For each $n$, \(\kappa_n^c(T)\) (Proposition \ref{bratteli09.prop-projhulln}) lies in the interior of some tile $t_i^{(n)}$ and therefore we can associate with $T$ a sequence of edges through those vertices.
This defines an inverse for $\varphi$.

To prove that $\varphi$ and $\varphi^{-1}$ are continuous it suffices to show that the preimages of base open sets are open.
Clearly $\varphi([\gamma]) = \Xi(p^c_\gamma)$, hence $\varphi^{-1}$ is continuous.
Conversely, since $\Xi$ is compact and $\partial \Bb$ is Hausdorff, the
continuity of $\varphi$ is automatic.
\end{proof}

We now endow $\Bb$ with a {\em horizontal structure} to take into account the adjacency of prototiles in the tilings $T_n$, $n \in \NM$.
\begin{defini}
\label{bratteli09.def-cBrat}
A {\em collared Bratteli diagram} associated with a proper nested sequence \((T_n)_{n\in\NM}\), is a graph \(\Bb^c = (\Bb, \Hh)\), with $\Bb=(\Vv,\Ee,r,s,u)$ the Bratteli diagram associated with \((T_n)_{n\in\NM}\) as in Definition \ref{bratteli09.def-tilbrat}, and where $\Hh$ is the set of {\em horizontal edges}:
\[
\Hh = \bigsqcup_{n \in \NM} \Hh_n\,, \qquad \text{with}  \qquad r,s : \Hh_n \rightarrow \Vv_n \,,
\]
given by \(h \in \Hh_n\) with \(s(h) = t, r(h) = t'\), if and only if there exists \(a,a'\in\RM^d\), such that \(t+a, t'+a' \in T_n\), with
\[
t + a \in \col(t'+a') \,, \text{ and } t'+a' \in \col(t+a) \,, 
\]
and we extend the label $u$ to $\Hh$, and set \(u(h)=a'-a\).
\end{defini}
There is a horizontal arrow in $\Hh_n$ between two tiles $t,t' \in \Vv_n$, if one can find ``neighbor copies'' in $T_n$ where each copy belongs to the collar of the other.
In other words there exists a patch $p(t,t')$ ({\it i.e.} its tiles have pairwise disjoint interiors) with \(p(t,t')+a \subset T_n\) such that 
\[ 
\col(t) \cup \bigl( \col(t') - u(h) \bigr) \subset p(t,t')\,,
\]
see Figure \ref{bratteli09.fig-horizontal} for an illustration.
\begin{figure}[!ht]
  \begin{center}
  \includegraphics[width=8cm]{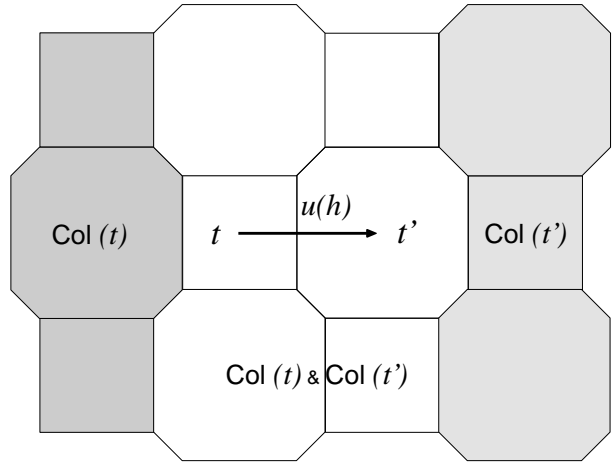}
  \end{center}
\caption{\small{Illustration of a horizontal edge \(h\in\Hh\) with \(s(h)=t\) and \(r(h)=t'\).}}
\label{bratteli09.fig-horizontal}
\end{figure}

For $h\in\Hh$, we define its {\em opposite edge} $h^\op$ by 
\[
s( h^\op) = r( h), \,  r(h^\op) = s(h) \,, \ \text{and} \ u(h^\op) = - u(h) \,.
\]
Clearly, for all $h$ in $\Hh$, $h^\op$ also belongs to $\Hh$, and $(h^\op)^\op = h$.
Also, the definition allows {\em trivial edges}, that is edges $h$ for which $s(h) = r(h)$ and $u(h)=0$.

\subsection{Equivalence relations}
\label{bratteli09.ssect-eqrel}

\begin{defini}
\label{bratteli09.def-AFER}
Let $\Bb$ be a Bratteli diagram and let
\[
R_n = \bigl\{
(x, \gamma) \in \partial \Bb \times \Pi_n \, : \, r( x\vert_{\Pi_n}) = r( \gamma) \bigr\}\,.
\]
with the product topology (discrete topology on $\Pi_n$).

The {\em $AF$-equivalence relation} is the direct limit of the $E_n$ given by
\[
R_{AF} = \varinjlim_n R_n =
\bigl\{
\bigr((e_n)_{n\in\NM}, (e'_n)_{n\in\NM} \bigr) \in \partial \Bb \times \partial \Bb \, : \, \exists n_0 \ \forall n\ge n_0 \ e_n = e'_n
\bigr\}\,,
\]
with the direct limit topology.
For $(x, y) \in R_{AF}$ we write $x \eaf y$ and say that the paths are {\em tail equivalent}.
\end{defini}

It is well known that $R_{AF}$ is an $AF$-equivalence relation, as the direct limit of the compact \'etale relations $R_n$, see \cite{Phi05}.

Assume now that $\Bb$ is a Bratteli diagram associated with a proper sequence of nested tilings $(T_n)_{n\in\NM}$.
We show now what this $AF$-equivalence relation represents for $T_1$ and its transversal $\Xi$.

For a finite path $\gamma \in \Pi_n$, let us write
\begin{equation}
\label{bratteli09.eq-pathtile}
t_{\gamma} = r(\gamma) + u(\gamma)\,,
\end{equation}
which is the support of the patch \(p_\gamma\) as defined in equation \eqref{bratteli09.eq-pathpatch}: \( \sigma_1 \circ \sigma_2 \circ \cdots \sigma_n (t_\gamma) = p_{\gamma}\).
For $x \in\partial \Bb$, we can see $t_1=t_{x\vert_{\Pi_1}}$ as a subset of $t_n=t_{x\vert_{\Pi_n}}$ for all $n\ge 2$.
We characterize the subset of $\partial \Bb$ for which $t_1$ stays close to the boundary of $t_n$ for all $n$, and its complement.
Recall that a $G_\delta$ is a countable intersection of open sets, and an $F_\sigma$ a countable union of closed sets.

\begin{lemma}\label{bratteli09.lem-minimal}
For any $n \in \NM$, there exists a $k > 0$ such that for any $v \in \Vv_n$ and
any $v' \in \Vv_{n+k}$, there is a path in $\partial \Bb$ from $v$ to $v'$.

In particular, any $x \in \partial \Bb$, the $AF$-orbit of $x$ is dense.
\end{lemma}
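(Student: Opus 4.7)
The plan is to combine the repetitivity of each $T_n$, which provides a uniform radius within which every collared prototile of $T_n$ appears, with condition (iii) of Definition \ref{bratteli09.def-nested}, which forces the inner radii $r_{n+k}$ to grow linearly in $k$. Since vertices at generation $n$ correspond to collared prototiles of $T_n$, and a sequence of edges from $v \in \Vv_n$ to $v' \in \Vv_{n+k}$ amounts to a representative of the collared prototile $v$ occurring inside the patch $\sigma_{n+1}\circ\cdots\circ\sigma_{n+k}(v') \subset T_n$, the lemma reduces to the geometric statement: for $k$ large enough, every level-$(n+k)$ tile is large enough that its subdivision contains a copy of every collared prototile at level $n$ in its interior.

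\textbf{Main step.} Fix $n$. Since $T_n$ has FLC it admits only finitely many collared prototiles $\widehat{t}_1^{(n)},\dots,\widehat{t}_{p_n}^{(n)}$; by repetitivity of $T_n$, each patch $\col(\widehat{t}_i^{(n)})$ has an associated repetitivity radius $\varrho_i$. Set $\varrho_n := \max_i \varrho_i$ and let $R_n$ bound the outer radii of these collared patches. Using $r_{n+k} \geq r_n + k\rho$ from condition (iii), choose $k$ so large that $r_{n+k} > \varrho_n + R_n$. For any $v'\in\Vv_{n+k}$, the patch $\sigma_{n+1}\circ\cdots\circ\sigma_{n+k}(v')$ covers the ball of radius $r_{n+k}$ around the puncture of $v'$, and hence, by choice of $k$, contains a translate of $\col(\widehat{t}_i^{(n)})$ entirely in its interior for every $i$. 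Reading off the intermediate generations from $v'$ down to that translate of $v$ produces a sequence of composable edges in $\Bb$ from $v$ to $v'$, as required.

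\textbf{Density and main obstacle.} For the density of the AF-orbit of $x \in \partial\Bb$, it suffices to meet every basic open set $[\gamma]$ with $\gamma$ of length $n$. Apply the first part with this $n$ to obtain $k$, set $v=r(\gamma)$ and $v'=r(x|_{\Pi_{n+k}})$, pick a finite sequence of edges of length $k$ from $v$ to $v'$, and splice $\gamma$, that sequence, and the tail $(e_j)_{j>n+k}$ of $x$ together to obtain $y \in [\gamma]$ with $y \eaf x$. The main technical obstacle is the \emph{collared} bookkeeping: one must ensure not merely that a copy of the underlying tile of $v$ appears in the subdivision of $v'$, but that its full collar matches and lies inside that subdivision. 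The margin $R_n$ in the estimate $r_{n+k} > \varrho_n + R_n$ is what provides the room, but one has to verify, using the $u$-interior convention recalled in Section \ref{bratteli09.ssect-approxtil}, that the tiles forming this copy of $\col(v)$ are genuinely assigned to $v'$ rather than to an adjacent level-$(n+k)$ tile along the Voronoi boundary.
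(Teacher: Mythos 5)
Your proof is correct and follows essentially the same route as the paper's: use repetitivity of $T_n$ to get a radius within which every (collared) prototile of $T_n$ occurs, use condition (iii) of Definition \ref{bratteli09.def-nested} to pick $k$ with $r_{n+k}$ exceeding that radius, and then prove density by splicing $\gamma$, a connecting path, and the tail of $x$. Your extra margin $R_n$ and the remark about the $u$-interior assignment of collar tiles are a slightly more careful bookkeeping of a point the paper's proof passes over silently, but the argument is the same.
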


\begin{proof}
The definition of the $AF$ topology and the repetitivity of the underlying
tilings are the two key elements of this proof.
First, we prove that for any $v \in \Vv_n$, there exists $k \in \NM$ such that
for any $v' \in \Vv_{n+k}$, there is a path from $v$ to $v'$.
This is repetitivity: a vertex $v$ corresponds to a tile in $T_n$.
By repetitivity, any tile of $T_n$ appears within a prescribed range, say $R$.
Now, pick $k$ such that the inner radius of the tiles of $T_{n+k}$ is greater
than $R$.
It means that any tile of $T_n$ appears in (the substitute) of any tile of
$T_{n+k}$.
This is exactly equivalent to the existence of a path from any $v \in \Vv_n$ to
any $v' \in \Vv_{n+k}$.

Now, consider $x,y \in \Bb$.
Let us show that $y$ can be approximated by elements $x_n$ in $\Bb$ which are
all $AF$-equivalent to $x$.
Let $x_n$ be defined as follows: $(x_n)_{\vert \Pi_n} = y_{\vert \Pi_n}$.
We just proved that there is a $k$ such that there is a path from
$r(y_{\vert \Pi_n})$ to $r(x_{\vert \Pi_{n+k}})$.
Continue $x_n$ with this path, and define its tail to be the tail of $x$.
Then the sequence $(x_n)_{n \in \NM}$ is the approximation we were looking for.
\end{proof}

\begin{proposi} 
\label{bratteli09.prop-Gdelta}
The subset 
\[
G = \bigl\{ x \in \partial\Bb \ : \ \lim_{n\rightarrow + \infty} \dist( t_1, \partial t_n) = + \infty \bigr\}
\]
is a {\em dense $G_\delta$ in $\partial \Bb$}.
\end{proposi}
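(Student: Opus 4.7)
My plan is to write $G$ as a countable intersection of dense open sets and apply Baire's theorem to the Cantor set $\partial\Bb$. For each $k\in\NM$, set
$$
U_k = \bigl\{ x \in \partial\Bb : \exists n\in\NM,\ \dist(t_{x|_{\Pi_1}},\ \partial t_{x|_{\Pi_n}}) > k \bigr\}.
$$
Since the tiles $t_{x|_{\Pi_n}}$ are nested as $n$ grows, the function $n \mapsto \dist(t_{x|_{\Pi_1}}, \partial t_{x|_{\Pi_n}})$ is non-decreasing, so $G = \bigcap_k U_k$. Each condition inside $U_k$ depends only on $x|_{\Pi_n}$, hence cuts out a finite union of cylinders $[\gamma]$ with $\gamma \in \Pi_n$; taking the union over $n$ shows $U_k$ is open.

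The heart of the argument is the density of each $U_k$. Given a cylinder $[\gamma_0]$ with $\gamma_0 \in \Pi_{n_0}$, my aim is to extend $\gamma_0$ to some $x \in [\gamma_0] \cap U_k$. Two inputs drive the argument: the repetitivity of $T_{n_0}$, which furnishes $\rho > 0$ such that every ball of radius $\rho$ in $T_{n_0}$ contains a translate of the collared tile $r(\gamma_0)$; and condition (iii) of Definition \ref{bratteli09.def-nested}, giving $r_n \to \infty$. Let $R$ be the outer radius of $r(\gamma_0)$ and choose $N$ with $r_N > k + \rho + R$. For any $v \in \Vv_N$, the patch $\sigma_{n_0+1}\circ\cdots\circ\sigma_N(v)$ contains the ball of radius $r_N$ about the puncture of $v$, which by repetitivity contains a copy of $r(\gamma_0)$ whose support lies within the concentric sub-ball of radius $\rho + R$. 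This copy therefore sits at distance at least $r_N - \rho - R > k$ from the boundary of $v$.

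The embedding of this copy inside $v$, read off level by level through the intermediate substitutions $\sigma_{n_0+1}, \ldots, \sigma_N$, determines a finite path $\tilde\gamma$ from $r(\gamma_0)$ to $v$ in the collared Bratteli diagram, with labels $u(\tilde\gamma)$ recording the relative positions of the intermediate tiles. The concatenation $\gamma_0\cdot\tilde\gamma \in \Pi_N$ extends $\gamma_0$, and by regularity of $\Bb$ (Definition \ref{bratteli09.def-Bdiag}) it extends further to some $x \in \partial\Bb \cap [\gamma_0]$. By construction $t_{x|_{\Pi_1}}$ sits inside the translated copy of $r(\gamma_0)$ at distance $> k$ from $\partial t_{x|_{\Pi_N}}$ (the distance is translation invariant, so it can be computed inside the untranslated substitute of $v$). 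Hence $x \in U_k$, and Baire's theorem applied to the Cantor set $\partial\Bb$ concludes the proof.

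The main subtlety I expect is in the second and third paragraphs: one must verify that a geometric occurrence of the collared prototile $r(\gamma_0)$ inside the substitute of $v$ produces an honest finite path in the \emph{collared} Bratteli diagram with matching labels. This is precisely the point of working with collared prototiles in Definition \ref{bratteli09.def-tilbrat} — the collared structure uniquely determines at each intermediate level $n_0, n_0+1, \ldots, N$ which vertex contains the embedded tile and which edge records the transition, so the construction of $\tilde\gamma$ is unambiguous.
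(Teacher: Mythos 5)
Your proof is correct and follows essentially the same route as the paper's: writing $G$ as a countable intersection of open sets $U_k$ (the paper's $G_m$, equivalent by the monotonicity of $n \mapsto \dist(t_1,\partial t_n)$ that you both exploit), proving each is dense by combining repetitivity with the growth of the radii from Definition \ref{bratteli09.def-nested}(iii), and concluding with Baire on the compact space $\partial\Bb$. The only cosmetic difference is that the paper routes the density step through Lemma \ref{bratteli09.lem-minimal} and an outer-radius estimate, whereas you invoke repetitivity of $T_{n_0}$ directly inside a single deep tile of $T_N$ and use the inner radius $r_N$; your quantitative bookkeeping ($r_N > k + \rho + R$) is, if anything, slightly more explicit than the paper's.
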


\begin{proof}
 For $m \in \NM$, let \(G_m = \{x \in \partial \Bb \, : \, \exists n_0 \in \NM,
\, \forall n \geq n_0, \, \dist(t_1,\partial t_n) > m \}\).
Then $G = \bigcap_{m \in \NM}{G_m}$.
Show that every $G_m$ is a dense open set in $\partial \Bb$.
Remark that if for some $n_0$, $\dist(t_1,\partial t_{n_0}) > m$, then this
property holds for all $n > n_0$.

Let us first prove that $G_m$ is dense.
Let $n \in \NM$. Then there is a $k$ such that there is a path from any $v \in
\Vv_n$ to any $v' \in \Vv_{n+k}$, by Lemma~\ref{bratteli09.lem-minimal}.
Let $l$ be such that $R_{n+k+l} - R_{n+k} > m$, where $R_n$ is the outer radius
of the tiles of $T_n$.
Then, let $\gamma$ be a path from $\Vv_{n+k}$ to $\Vv_{n+k+l}$ corresponding to
the inclusion of a tile of $T_{n+k}$ in the middle of a path of $T_{n+k+l}$.
Now, for any path $\eta$ of length $n$, it is possible
to join $\eta$ to $\gamma$.
Extend then this path containing $\eta$ and $\gamma$ arbitrarily to an infinite
path $x$ in $\partial \Bb$.
Then $x$ satisfies $\dist(t_1,t_{n+k+l}) > \dist(t_{n+k},t_{n+k+l}) > R_{n+k+l}
- R_{n+k} > m$, so $x \in G_m$.
It proves that $G_m$ is non-empty.
Since we could do this construction for all $n \in \NM$ and all $\eta$ of
length $n$, it proves that $G_m$ is dense.

Finally, $G_m$ is open because if $x \in G_m$ and satisfies $\dist(t_1,t_{n_0}) >
m$, then the tail of $x$ after generation $n$ can be changed without changing this
property. It proves that $G_m$ contains a neighborhood around all of its
points, and so it is open.

It proves that $G$ is a $G_\delta$ as intersections of dense open sets. Since
$\partial \Bb$ is compact, it satisfies the Baire property and so $G$ is dense.
\end{proof}

\begin{coro} 
\label{bratteli09.cor-Fsigma}
The subset 
\[
F = \bigl\{ x \in \partial\Bb \ : \ \lim_{n\rightarrow + \infty} \dist( t_1, \partial t_n) < + \infty \bigr\}
\]
is a {\em dense $F_\sigma$ in $\partial \Bb$}.
\end{coro}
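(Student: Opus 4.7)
The plan is to prove the corollary in three stages: identify $F$ as the complement of $G$, write it as a countable union of closed sets, and establish density via explicit path construction.

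First, I would observe that for every $x \in \partial\Bb$ the sequence $\bigl(\dist(t_1,\partial t_n)\bigr)_{n\ge 1}$ is non-decreasing. Indeed, the inclusions $t_1 \subset t_n \subset t_{n+1}$ imply that any segment from a point $p\in t_1$ exiting $t_{n+1}$ must first cross $\partial t_n$, so $\dist(p,\partial t_n) \le \dist(p,\partial t_{n+1})$; taking the infimum over $p\in t_1$ gives the monotonicity. Hence the limit exists in $[0,+\infty]$ and $F = \partial\Bb \setminus G$. For the $F_\sigma$ property, write $F = \bigcup_{m\in\NM} F_m$ with
\[
F_m = \bigl\{ x\in\partial\Bb \ : \ \dist(t_1,\partial t_n) \le m \text{ for all } n\in\NM \bigr\}.
\]
Since $\dist(t_1(x),\partial t_n(x))$ depends only on $x\vert_{\Pi_n}$, it is locally constant on cylinders, so each $F_m$ is a countable intersection of clopen sets, hence closed.

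For density, let $[\gamma_0]$ with $\gamma_0 \in \Pi_N$ be a basic open set; I aim to construct $x \in F\cap [\gamma_0]$. Setting $x\vert_{\Pi_N} = \gamma_0$ fixes $t_1 \subset t_N$ and determines $d_0 := \dist(t_1,\partial t_N) < +\infty$. I would select a $(d-1)$-face $F^*$ of $t_N$ realizing $\dist(t_1,F^*) = d_0$, and inductively extend $\gamma_0$ to an infinite path $x$ while maintaining the invariant that $F^* \subset \partial t_k$ for all $k \ge N$. This would yield $\dist(t_1,\partial t_k) \le d_0$ for all $k$, so $x \in F_{\lceil d_0\rceil + 1} \subset F$. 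Concretely, the inductive step requires: given $t_k$ whose maximal face $F^{(k)}$ contains $F^*$, choose an edge $e_{k+1}$ from $t_k$ to some $t_{k+1}$ so that $t_k$ appears as a boundary sub-tile of the substituted patch $\sigma_{k+1}(t_{k+1})$ with the entire face $F^{(k)}$ lying on $\partial t_{k+1}$.

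The main obstacle is justifying the existence of such an edge at every step: one must show that for every prototile $t_k \in \Vv_k$ and every maximal face $F^{(k)}$ of it, there is a prototile $t_{k+1} \in \Vv_{k+1}$ whose substituted patch contains $t_k$ as a boundary tile with $F^{(k)}$ on the super-tile boundary. This should follow from the combination of repetitivity, aperiodicity, and FLC of the nested sequence $(T_n)_n$: if some face type were always shared between adjacent tiles in the interior of super-tiles at level $k+1$, then the tilings would exhibit a rigid periodic-like structure across all scales, contradicting aperiodicity (and incompatible with the uniform reachability provided by Lemma~\ref{bratteli09.lem-minimal}). Once this existence result is secured, iterating the inductive construction produces an infinite path $x \in F\cap[\gamma_0]$, establishing density and completing the proof.
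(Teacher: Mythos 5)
Your treatment of the $F_\sigma$ part is correct and coincides with the paper's: the monotonicity of $n \mapsto \dist(t_1,\partial t_n)$ (which the paper also uses, inside the proof of Proposition \ref{bratteli09.prop-Gdelta}) identifies $F$ with $G^c$, and your $F_m = \bigl\{x : \dist(t_1,\partial t_n)\le m \text{ for all } n\bigr\}$ is exactly $G_m^c$, closed because each level-$n$ condition depends only on the cylinder $[x\vert_{\Pi_n}]$. The genuine gap is in the density argument, precisely at the step you flag as the main obstacle: the claim that every prototile $v\in\Vv_k$ (or even some face of it) can be realized on the boundary of a level-$(k+1)$ supertile is \emph{false} in general, and aperiodicity does not repair it. Take the one-dimensional primitive substitution $\sigma(a)=bab$, $\sigma(b)=babb$; its abelianization has Perron eigenvalue $2+\sqrt{3}$, so the tiling is aperiodic, yet the letter $a$ occurs only in the strict interior of $\sigma(a)$ and of $\sigma(b)$, so an $a$-supertile of any level never meets the boundary of the supertile of the next level (collaring the prototiles does not change this). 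If $\gamma_0$ terminates at such a vertex your induction cannot take even its first step, and the heuristic that an always-interior face forces a ``rigid periodic-like structure'' is simply not valid. Even where the step is possible, keeping one fixed face $F^*$ on $\partial t_k$ for \emph{all} $k$ would require an additional K\"onig/pigeonhole argument that some single face type propagates through infinitely many levels; and in the paper's generality tiles need not be polytopes, so ``$(d-1)$-face'' is not even well defined.

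The paper's route avoids boundary faces altogether and uses repetitivity of $T_1$: for the patch $p_l=p_{x\vert_{\Pi_l}}$ there is $R_l$ such that every ball of radius $R_l$ in $T_1$ contains a copy of $p_l$; hence every sufficiently large supertile contains a copy of $p_l$ within distance $R_l$ of its boundary --- not touching it, merely uniformly close, which is all that membership in $F$ requires since $F$ asks only for a finite bound. One then extends the given finite path through such copies so that $\dist(t'_1,\partial t'_n)<R_l$ for all large $n$. To salvage your construction, replace the invariant ``a face of $t_N$ lies on $\partial t_k$'' by ``a copy of $p_{\gamma_0}$ lies within a fixed distance $R$ of $\partial t_k$'', prove existence of such copies at every level by repetitivity, and extract the infinite path by K\"onig's lemma from the tree of admissible finite paths, which is prefix-closed thanks to exactly the monotonicity you established.
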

\begin{proof}
With the notation of Proposition \ref{bratteli09.prop-Gdelta} consider the closed set $F_m = G_m^c$.
We have $F_m \subset F_{m+1}$ and $F = \cup_{m\in \NM} F_m$, thus $F$ is an $F_\sigma$.

The proof of the density of $F$ in $\partial \Bb$ is similar to that for $G$ in Proposition \ref{bratteli09.prop-Gdelta}.
Let $x \in \partial \Bb$.
Fix $l\in \NM$ and consider the patch \(p_l = p_{x \vert_{\Pi_l}}\) as in equation \eqref{bratteli09.eq-pathpatch}.
By repetitivity of $T_1$ there exists $R_l$ such that $T_1$ has a copy of $p_l$ in each ball of radius $R_l$.
Hence there exists $n_l\in \NM$ such that for all $n\ge n_l$ the tiles of $T_n$ (viewed as patches of $T_1$ under the map \(\sigma_1 \circ \sigma_2 \circ \cdots \sigma_n\)) contain a copy of $p_l$ that lies within a distance $R_l$ to their boundaries :
\(\dist(t_l, \partial t_n) \le \dist(t_1, \partial t_n) < R_l\).
We can thus extend the finite path $x_{\vert \Pi_{n_l}}$ to $x' \in \partial\Bb$ such
that \(\dist(t'_1, \partial t'_n) < R_l\) for all $n\ge n_l$.
Set $x_l = x'$.
We clearly have $x_l \in F$.
The sequence $(x_l)_{l\in\NM}$ built in this way converges to $x$ in $\partial \Bb$.
This proves that $F$ is dense in $\partial \Bb$.
\end{proof}

{\bf Notation.}
We will use now the following notation:
\begin{equation}
\label{bratteli09.eq-notation}
T_x := \varphi(x)\,, \ \text{ for } x\in\partial\Bb\,, \qquad \text{and} \qquad
x_T:=\varphi^{-1}(T)\,, \ \text{ for } T\in\Xi\,,
\end{equation}
where $\varphi$ is the homeomorphism of Theorem \ref{bratteli09.thm-isotrans}.

For each $T \in\Xi$ the equivalence relation $\eaf$ induces an equivalence relation on $T^\punc$:
\[
a \; \dot{\sim} \; b \text{ in } T^\punc \ \iff \  x_{T - a} \;  \eaf \; x_{T - b} \,.
\]
\begin{defini}
\label{bratteli09.def-AFregion}
An $AF$-{\em region} in a tiling $T \in \Xi$ is the union of tiles whose punctures are $\dot{\sim}$-equivalent in $T^\punc$.
\end{defini}
\begin{proposi}
\label{bratteli09.prop-AFregion}
A tiling $T \in\Xi$ has a single $AF$-region if and only if \(x_T \in G\).
\end{proposi}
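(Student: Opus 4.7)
My plan is to prove both directions directly by tracking, for each $x\in\partial\Bb$ and each $n$, the $T_n$-tile $t_n=t_{x\vert_{\Pi_n}}$ of $\varphi(x)$ containing the origin. For $T\in\Xi$ and $a\in T^\punc$, denote by $t_n,\,t_n^a$ the $T_n$-tiles of $T$ containing the origin and $a$ respectively, with punctures $p_n,\,p_n^a$ in $T$. These are exactly the tiles read off from the paths $x_T$ and $x_{T-a}$ (the latter transported back into $T$-coordinates).

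\textbf{Forward direction ($x_T\in G\Rightarrow$ single $AF$-region).} Fix $a\in T^\punc$. Since $\dist(t_1,\partial t_n)\to\infty$ along $x_T$, there exists $n_0$ such that $\dist(t_1,\partial t_n)>|a|+R_1$ for all $n\ge n_0$; hence the $T_1$-tile at $a$ lies inside $t_n$, meaning origin and $a$ belong to the same $T_{n_0}$-tile. Since the $T_k$-partitions coarsen with $k$, $t_k^a=t_k$ for all $k\ge n_0$. Looking at Definition~\ref{bratteli09.def-tilbrat}, the edges $e_k^T$ and $e_k^{T-a}$ are each determined by the nested pair $(t_{k-1}\subset t_k)$, respectively $(t_{k-1}^a\subset t_k^a)$, so they coincide for all $k\ge n_0+1$. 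Thus $x_T\eaf x_{T-a}$; as $a$ was arbitrary, $T$ has a single $AF$-region.

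\textbf{Reverse direction ($x_T\notin G\Rightarrow$ more than one $AF$-region).} There exist $M>0$ and $n_k\uparrow\infty$ with $\dist(t_1,\partial t_{n_k})\le M$. For each $k$, pick a $T_1$-tile of $T$ adjacent to $t_{n_k}$ across its boundary; its puncture $a_k\in T^\punc$ satisfies $|a_k|\le M+2R_1$ and $a_k\notin t_{n_k}$. By FLC, $T^\punc\cap B(0,M+2R_1)$ is finite, so pigeonhole extracts a single $a\in T^\punc$ with $a\notin t_{n_k}$ for infinitely many $k$. It suffices to show $x_T\not\eaf x_{T-a}$.

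Assume, for contradiction, that $e_k^T=e_k^{T-a}$ for all $k\ge n_0$. Equality of target vertices forces $t_k$ and $t_k^a$ to share the same collared prototile, so they differ by the translation $\tau_k:=p_k^a-p_k$; equality of the labels $u(e_k)=p_k-p_{k-1}$ then yields $\tau_k=\tau_{k-1}$, so $\tau_k\equiv\tau$ is constant. Hence $t_k^a=t_k+\tau$ as sub-patches of $T$ for every $k\ge n_0$, with inner radii $r_k\to\infty$. Using compactness of $\Omega$, extract a convergent subsequence $T-p_{k_j}\to T'\in\Omega$; the identical patches $t_{k_j}-p_{k_j}$ appear at the origin of both $T-p_{k_j}$ and $T-p_{k_j}^a=(T-p_{k_j})-\tau$, so passing to the limit gives $T'=T'-\tau$ on arbitrarily large balls about the origin. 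Strong aperiodicity of $\Omega$ (Theorem~\ref{bratteli09.thm-hull}\,(iv)) forces $\tau=0$, whence $t_{n_k}^a=t_{n_k}$ would contain $a$, contradicting the choice of $a$. The principal difficulty is precisely this last step: the $AF$-relation only matches \emph{abstract} collared-prototile data, so promoting the match to an actual constant translation of concrete patches in $T$ and then excluding $\tau\ne0$ genuinely requires the limit argument in the hull together with strong aperiodicity.
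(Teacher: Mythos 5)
Your proof is correct. The implication \(x_T\in G\Rightarrow\) single $AF$-region is essentially the paper's argument: the paper phrases it through the maps $\kappa_n^c$ landing in the same tile of $K^c(T_n)$ for all large $n$, while you phrase it through the concrete supertiles satisfying $t_k^a=t_k$; these are the same observation, and both yield equality of edges along the tail. For the converse the two routes genuinely diverge. The paper argues directly: assuming a single $AF$-region, it picks a puncture $a$ with $|a|>\rho$ and asserts that $x_T\eaf x_{T-a}$ forces $\dist(t_1,\partial t_n)>|a|$, silently identifying ``same tile of $K^c(T_n)$'' with ``same concrete supertile of $T$ at the same relative position.'' You instead prove the contrapositive, and your argument is more careful on precisely this point: after pigeonholing a fixed puncture $a$ lying outside every $t_n$, you note that tail equivalence only matches abstract collared-prototile data together with the labels, hence a priori yields only $t_n^a=t_n+\tau$ for a constant $\tau$, and you exclude $\tau\neq 0$ by extracting a limit $T-p_{k_j}\to T'$ in the compact hull and invoking strong aperiodicity (Theorem \ref{bratteli09.thm-hull}). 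What the paper's version buys is brevity; what yours buys is a complete justification of the step the paper elides, at the cost of the extra compactness/aperiodicity argument. Two trivial remarks: your bound $|a_k|\le M+2R_1$ should be something like $M+3R_1$ (the nearest boundary point of $t_{n_k}$ is already at distance up to $M+R_1$ from the origin), which affects nothing; and the monotonicity of $n\mapsto\dist(t_1,\partial t_n)$ in fact gives a uniform bound for all $n$, so the subsequence $n_k$ is not really needed.
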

\begin{proof}
Assume $T\in\Xi$ has a single $AF$-region.
Fix $\rho>0$.
Pick $a\in T^\punc$ with $|a|>\rho$.
Since \( x_T \eaf x_{T-a}\), there exists $n_0$ such that  $\kappa_n^c(T)$ and $\kappa_n^c(T-a)$ belong to the same tile in $K^c(T_n)$ for all $n\ge n_0$ (see Proposition \ref{bratteli09.prop-projhulln}).
Therefore \(\dist( t_1, \partial t_n) >a > \rho\) for all $n\ge n_0$.
Since $\rho$ was arbitrary this proves that \(\lim_{n\rightarrow + \infty} \rho_n= +\infty\), {\it i.e.} that \(x_T\in G\).

Assume that $x\in G$.
Choose $a\in T_x^\punc$.
Since \(\lim_{n\rightarrow + \infty} \dist( t_1, \partial t_n) = + \infty\), there exists $n_0$ such that for all $n\ge n_0$ the patch $p_n=p_{x\vert_{\Pi_n}} \subset T_x$ contains a ball of radius $2aR/r$ around the origin (where $r,R,$ are the parameters of the Delone set $T_x^\punc$).
Therefore $T_x-a$ agree with $p_n -a$ on a ball of radius $aR/r$.
Hence $\kappa_n^c(T_x)$ and $\kappa_n^c(T_x-a)$ belong to the same tile in $K^c(T_n)$ for all $n\ge n_0$.
Hence \(r(x\vert_{\Pi_n}) = r\bigl( x_{T_x-a}\vert_{\Pi_n} \bigr)\) for all $n\ge n_0$.
So we have \(x \eaf x_{T_x-a}\), {\it i.e.} \(a \, \dot{\sim} \,0\).
Since $a$ was arbitrary, this shows that $\xi$ has a single $AF$-region.
\end{proof}

\begin{rem}
\label{bratteli09.rem-AForbit}
{\em Let \(R'_{AF}=\varphi^\ast(R_{AF})\) be the equivalence relation on $\Xi$ that is the image of the equivalence relation $R_{AF}$ induced by the homeomorphism of Theorem \ref{bratteli09.thm-isotrans}.
Proposition~\ref{bratteli09.prop-AFregion} shows that the $R'_{AF}$-orbit of a tiling $T\in\Xi$ is the set of all translates of $T$ by vectors linking to punctures that are in the $AF$-region of the origin:
\[
[T]_{AF} = \bigl\{ T -a \ : \ a \in T^\punc\,, \;    a \,\dot{\sim}\, 0
\bigr\}\,,
\]
So if $T$ has more than one $AF$-region, {\it i.e.} if \(x_T \in F\), then its $R'_{AF}$-orbit is only a proper subset of its $R_\Xi$-orbit (Definition \ref{bratteli09.eq-ERtrans}).
So we have:
\[
[T]_{AF} = [T]_{R_\Xi}  \iff x_T \in G\,.
\]
And for all $T$ in the dense subset \(\varphi(F)\subset \Xi\) we have \([T]_{AF} \subsetneq [T]_{R_\Xi}\).
}
\end{rem}

\subsection{Reconstruction of tiling groupoids}
\label{bratteli09.ssect-reconstruction}

As noted in Remark \ref{bratteli09.rem-AForbit}, the images in $\Xi$ of the $R_{AF}$-orbits do not always match those of $R_\Xi$.
In this section, we build a new equivalence relation on $\partial \Bb$ that ``enlarges'' $R_{AF}$, and from which we recover the full equivalence relation $R_\Xi$ on $\Xi$.
We consider a collared Bratteli diagram \(\Bb^c=(\Bb, \Hh)\) associated with a nested sequence \((T_n)_{n\in\NM}\) (Definition~\ref{bratteli09.def-tilbrat} and~\ref{bratteli09.def-cBrat}), and we denote by $\Xi$ the canonical transversal of the tiling space of $T_1$.

\begin{defini}
\label{bratteli09.def-comdiag}
A {\em commutative diagram} in $\Bb^c$ is a closed subgraph 
\[
\xymatrix{
\ar[dd]^{e} \ar[rr]^{h} & & \ar[dd]^{e'} \\
&& \\
\ar[rr]^{h'} &&
}
\] 
where $e,e'\in\Ee_n$, $h\in\Hh_{n-1}$, and $h'\in\Hh_n$, for some $n$, and such that
\[
\left\{
\begin{array}{lcl}
s(h)&= &s(e)\,,\\
r(h)&= &s(e')\,, \\
r(e)&= &s(h')\,, \\
r(e')&=&r(h')\,,
\end{array}
\right. \quad
\text{ and } \quad u(e)+u(h') = u(h)+u(e')\,.
\]
\end{defini}

Figure \ref{bratteli09.fig-comdiag} illustrates geometrically the conditions of adjacency required for tiles to fit into a commutative diagram.
\begin{figure}[!ht]
  \begin{center}
  \includegraphics[width=10cm]{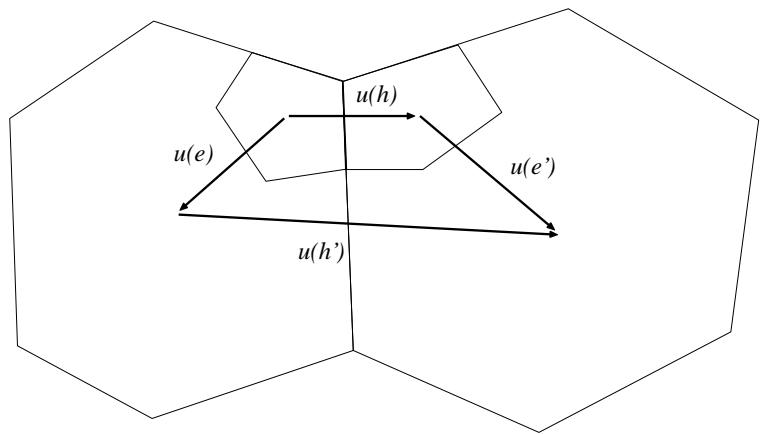}
  \end{center}
\caption{\small{Illustration of the commutative diagram in Definition \ref{bratteli09.def-comdiag}.}}
\label{bratteli09.fig-comdiag}
\end{figure}

With the notion of commutative diagram we can now define an equivalence relation on $\partial \Bb$ that contains $R_{AF}$.
\begin{defini}
\label{bratteli09.def-equiv}
We say that two infinite paths \(x=(e_n)_{n\in\NM}\) and \(y=(e'_n)_{n\in\NM}\) in $\partial \Bb$ are equivalent, and write \(x \, \sim \, y\), if there exists $n_0 \in \NM$, and $h_n \in \Hh_n$ for all $n\ge n_0$, such that for each $n>n_0$ the subgraph
\[
\xymatrix{
\ar[dd]^{e_n} \ar[rr]^{h_{n-1}} & & \ar[dd]^{e'_n} \\
&& \\
\ar[rr]^{h_n} &&
}
\] 
is a commutative diagram.
\end{defini}

\begin{lemma}
\label{bratteli09.lem-taileq}
If \( x\, \sim \, y\) in $\partial \Bb$, then there exists \(a(x,y)\in\RM^d\) such that \(T_y = T_x + a(x,y)\).
\end{lemma}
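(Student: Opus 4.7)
The first step is to identify the candidate translation and verify it is well defined. For $n \geq n_0$ I would set
\[
a_n := u(\gamma'_n) - u(\gamma_n) - u(h_n),
\]
where $\gamma_n = x\vert_{\Pi_n}$ and $\gamma'_n = y\vert_{\Pi_n}$. The commutativity relation $u(e_n) + u(h_n) = u(h_{n-1}) + u(e'_n)$ for $n > n_0$ rearranges to $u(h_n) - u(h_{n-1}) = u(e'_n) - u(e_n)$, and telescoping this against $u(\gamma'_n) - u(\gamma'_{n-1}) = u(e'_n)$ and $u(\gamma_n) - u(\gamma_{n-1}) = u(e_n)$ gives $a_n = a_{n-1}$. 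Denote the common value by $a(x,y)$.

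Next I would show $T_y = T_x + a(x,y)$ by exhibiting arbitrarily large patches around the origin on which the two tilings agree. Fix $n \geq n_0$. By Theorem~\ref{bratteli09.thm-isotrans} the collared patch $p^c_{\gamma_n} = \sigma_1 \circ \cdots \circ \sigma_n(\col(r(\gamma_n))) + u(\gamma_n)$ sits at the origin of $T_x$. The horizontal edge $h_n$ means (Definition~\ref{bratteli09.def-cBrat}) that the translated prototile $r(\gamma'_n) + u(h_n)$ lies inside $\col(r(\gamma_n))$ as a tile of $T_n$; substituting down and absorbing the overall shift, the patch
\[
q_n := \sigma_1 \circ \cdots \circ \sigma_n(r(\gamma'_n)) + u(h_n) + u(\gamma_n)
\]
appears as a sub-patch of $p^c_{\gamma_n}$, and hence of $T_x$. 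Applying the translation by $a(x,y)$ and using $a(x,y) + u(\gamma_n) + u(h_n) = u(\gamma'_n)$ identifies $q_n + a(x,y)$ with $p_{\gamma'_n}$, placed so that its level-$1$ anchor puncture falls at the origin. On the $T_y$-side the same $p_{\gamma'_n}$ sits at the origin of $T_y = \varphi(y)$ in the same position, so $T_x + a(x,y)$ and $T_y$ agree on $p_{\gamma'_n}$ around the origin.

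Finally, $p_{\gamma'_n}$ contains a ball of radius $r_n$ centered at its anchor, and condition (iii) of Definition~\ref{bratteli09.def-nested} gives $r_n \to \infty$; by the metric of Remark~\ref{bratteli09.rem-topohull}, agreement on balls of arbitrary radius forces $T_x + a(x,y) = T_y$. The main delicate point is the puncture bookkeeping: one must verify (by induction on $n$ from the recursive definition of the label $u$) that in $T_x = \varphi(x)$ the generation-$n$ tile $r(\gamma_n)$ appears with puncture at $u(\gamma_n)$, while the anchor level-$1$ sub-tile sits with puncture at the origin. Once this convention is made explicit, the key identity $a(x,y) + u(\gamma_n) + u(h_n) = u(\gamma'_n)$ acquires its geometric meaning: $a(x,y)$ is precisely the displacement carrying the $x$-anchor to the $y$-anchor inside the overlapping patch produced by $h_n$.
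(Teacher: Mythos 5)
Your proof takes essentially the same route as the paper's: the identical telescoping of the commutativity relation $u(e_n)+u(h_n)=u(h_{n-1})+u(e'_n)$ shows the candidate vector is independent of $n$, and the horizontal edge is used in the same way to place (a translate of) $p_{\gamma'_n}$ inside $T_x$, a step the paper states even more tersely. One caveat on your concluding step: the ball of radius $r_n$ contained in $p_{\gamma'_n}$ is centered at the level-$n$ puncture $u(\gamma'_n)$, not at the level-$1$ anchor at the origin, so for paths with the origin staying near the boundary of $t_{\gamma'_n}$ (i.e.\ $y\in F$, precisely the case where the lemma is not already covered by tail equivalence) these patches need not contain large balls about the origin; to run the metric argument of Remark~\ref{bratteli09.rem-topohull} one should replace $p_{\gamma'_n}$ by the collared patches $p^c_{\gamma'_n}$, which do contain balls $B(0,\rho_n)$ with $\rho_n\to\infty$, exactly as in the proof of Theorem~\ref{bratteli09.thm-isotrans}.
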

\begin{proof}
Let $n_0\in\NM$ be as in Definition \ref{bratteli09.def-equiv}, and for $n>n_0$ set \( a_n = u(x\vert_{\Pi_n}) - u(y\vert_{\Pi_n}) + u(h_n)\).
For all $n>n_0+1$ we have
\[
a_n = 
a_{n-1} -u(h_{n-1}) +  u(e_n) - u(e'_n) + u(h_n) = a_{n-1}\,,
\]
where the last equality occurs by commutativity of the diagram between generations $n-1$ and $n$.
Hence we have \(a_n = a_{n_0}\) for all $n>n_0$.
Now for all $n>n_0$, the patches \(p_{x\vert_{\Pi_n}}\) and \(p_{y\vert_{\Pi_n}} + a_{n_0}\) belong to $T_x$ (and similarly \(p_{y\vert_{\Pi_n}}, p_{x\vert_{\Pi_n}} - a_{n_0}\subset T_y\)).
Hence \(T_x = T_y + a_{n_0}\).
Set \(a(x,y)=-a_{n_0}\)  to complete the proof.
\end{proof}

\begin{defini}
\label{bratteli09.def-eqrel}
We define the equivalence relation on $\partial \Bb$
\[
R_\Bb = \bigl\{ (x,y) \in \partial \Bb \ : \ x \, \sim \, y
\bigr\}\,,
\]
with the following topology: \((x_n,y_n)_{n\in\NM}\) converges to $(x,y)$ in $R_\Bb$, if 
\((x_n)_{n\in\NM}\) converges to $x$ in $\partial\Bb$, and \(a(x_n,y_n) \rightarrow a(x,y)\) in $\RM^d$.
\end{defini}
The FLC and repetitivity properties of $T_1$ imply that for all $T\in\Xi$, the set of vectors linking its punctures, \(T^\punc-T^\punc\), equals \(T_1^\punc - T_1^\punc\), and is {\em discrete} and {\em closed} (see \cite{BHZ00,La99A,La99B,LP03} for instance).
The convergence \((x_n,y_n)\rightarrow (x,y)\) in $R_\Bb$ implies then that there exists $n_0\in\NM$ such that \(a(x_n,y_n)=a(x,y)\) for $n\ge n_0$.
\begin{rem}
\label{bratteli09.rem-AFcomp}
{\em If \(x\, \sim \, y\) in $R_\Bb$ are such that the horizontal edges $h_n\in\Hh_n$ of the commutative diagrams are all trivial, then for all $n\ge n_0$ we  have
\(r(x\vert_{\Pi_n}) = s(h_n) = r(h_n)=r(y\vert_{\Pi_n})\), {\it i.e.} $x$ and $y$ are tail equivalent in $\Bb$: \(x \eaf y\).
Thus we have the inclusion
\[
R_{AF} \subsetneq R_\Bb\,.
\]
In view of Remark \ref{bratteli09.rem-AForbit}, the two equivalence relation coincide on $G$, but differ on $F$ (see Proposition \ref{bratteli09.prop-Gdelta} and Corollary \ref{bratteli09.cor-Fsigma}) hence the inclusion is not an equality.
}
\end{rem}

We now give a technical lemma to exhibit a convenient base for the topology of $R_\Bb$.

Given two paths \(\gamma,\gamma'\in\Pi_n\), such that there exists $h\in\Hh_n$ with \(s(h)=r(\gamma)\) and \(r(h)=r(h)\) we define
\begin{equation}
\label{bratteli09.eq-a}
a_{\gamma \gamma'} = u(\gamma)-u(\gamma')+u(h)\,, \quad \text{and} \quad 
[\gamma\gamma'] = \varphi^{-1} \Bigl( 
\varphi([\gamma]) \cap \bigl( \varphi([\gamma']) - a_{\gamma\gamma'} \bigr)
\Bigr)\,,
\end{equation}
where $\varphi$ is the homeomorphism of Theorem \ref{bratteli09.thm-isotrans}.
So \([\gamma\gamma']\) is the clopen set of tilings in $\Xi$ which have the patch $p_\gamma$ at the origin {\em and} a copy of the patch $p_{\gamma'}$ at position $a_{\gamma\gamma'}$.

Recall from Definition \ref{bratteli09.def-etale} that an $R_\Bb$-set is an open set in $R_\Bb$ on which the source and range maps are homeomorphisms.

\begin{lemma}
\label{bratteli09.lem-baseRsets}
For $\gamma,\gamma'\in\Pi_n$, $n\in\NM$, the sets
\[
O_{\gamma\gamma'} = \bigl\{
(x,y) \in R_\Bb \ : \  x\in[\gamma\gamma']\,, \ a(x,y)=a_{\gamma\gamma'}
\bigr\}\,,
\]
form a base of $R_\Bb$-sets for the topology of $R_\Bb$.
\end{lemma}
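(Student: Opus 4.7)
The plan is to establish two things: each $O_{\gamma\gamma'}$ is an $R_\Bb$-set, and the family $\{O_{\gamma\gamma'}\}$ is a base for the topology of $R_\Bb$.

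First I would verify that $O_{\gamma\gamma'}$ is open in $R_\Bb$. By Theorem \ref{bratteli09.thm-isotrans}, $\varphi$ is a homeomorphism, so $[\gamma\gamma']$ is clopen in $\partial\Bb$ as the $\varphi^{-1}$-image of the intersection of two basic clopen sets of $\Xi$. Moreover, by FLC of $T_1$ the set $T_1^\punc-T_1^\punc$ is discrete and closed in $\RM^d$, as noted after Definition \ref{bratteli09.def-eqrel}, so $a_{\gamma\gamma'}$ is isolated among the possible values of the continuous-in-the-first-variable map $a(\cdot,\cdot)$. Thus if $(x_n,y_n)\to(x_0,y_0)\in O_{\gamma\gamma'}$, then eventually $x_n\in[\gamma\gamma']$ and $a(x_n,y_n)=a_{\gamma\gamma'}$, so $O_{\gamma\gamma'}$ is open.

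Next I would check that $s\vert_{O_{\gamma\gamma'}}$ is a homeomorphism onto $[\gamma\gamma']$ (the argument for $r$ is symmetric). Injectivity is immediate, since by Lemma \ref{bratteli09.lem-taileq} the condition $a(x,y)=a_{\gamma\gamma'}$ forces $T_y = T_x + a_{\gamma\gamma'}$, which determines $y=\varphi^{-1}(T_y)$. For surjectivity, given $x\in[\gamma\gamma']$, set $y := \varphi^{-1}(T_x+a_{\gamma\gamma'})$ and construct, starting from the horizontal edge $h$ witnessing $a_{\gamma\gamma'}$, an infinite sequence of commutative diagrams above generation $n$; such a sequence exists because neighboring collared tiles at level $n$ lift to neighboring collared tiles at level $n+1$ in a proper nested structure, producing the required horizontal edges $h_m\in\Hh_m$ for all $m\ge n$. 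Continuity of $s$ is automatic, and continuity of $s^{-1}\colon x\mapsto(x,\varphi^{-1}(\varphi(x)+a_{\gamma\gamma'}))$ follows from continuity of $\varphi$ and $\varphi^{-1}$.

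For the base property, let $(x,y)\in R_\Bb$ and let $U$ be an open neighborhood of $(x,y)$ in $R_\Bb$. From the definition of the topology on $R_\Bb$ and the discreteness of $T_1^\punc-T_1^\punc$, the set $U$ contains a neighborhood of the form $\{(x',y')\in R_\Bb : x'\in[x\vert_{\Pi_N}],\ a(x',y')=a(x,y)\}$ for some $N$. Let $n_0$ and $(h_n)_{n\ge n_0}$ be the data realizing $x\sim y$ from Definition \ref{bratteli09.def-equiv}, and choose $n\ge\max(N,n_0)$. Setting $\gamma = x\vert_{\Pi_n}$, $\gamma'=y\vert_{\Pi_n}$, with $h_n$ as horizontal edge, the telescoping identity in the proof of Lemma \ref{bratteli09.lem-taileq} gives $a_{\gamma\gamma'}=a(x,y)$. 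Hence $(x,y)\in O_{\gamma\gamma'}$, and any $(x',y')\in O_{\gamma\gamma'}$ satisfies $x'\in[\gamma\gamma']\subset[x\vert_{\Pi_N}]$ and $a(x',y')=a_{\gamma\gamma'}=a(x,y)$, so $O_{\gamma\gamma'}\subset U$.

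The delicate step is surjectivity of $s\vert_{O_{\gamma\gamma'}}$: producing, from the single base horizontal edge, an entire coherent infinite tower of horizontal edges at every higher generation, so that the pair $(x,y)$ really belongs to $R_\Bb$ and not merely satisfies a finite local compatibility. This is where the properness of the nested sequence (Definition \ref{bratteli09.def-nested}), together with the inductive compatibility of collared prototile structures of the $T_n$, is essential.
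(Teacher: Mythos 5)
Your proposal is correct and follows essentially the same route as the paper: the base property comes from the discreteness of $T_1^\punc-T_1^\punc$ together with the cylinder sets $[\gamma]$ forming a base, and the $R_\Bb$-set property comes from Lemma \ref{bratteli09.lem-taileq} giving uniqueness of the partner $y$ over each $x\in[\gamma\gamma']$. The one place you go beyond the paper is the surjectivity of $s\vert_{O_{\gamma\gamma'}}$, where you explicitly build the infinite tower of commutative diagrams by lifting adjacency of collared tiles from generation to generation; the paper leaves this implicit here (it is essentially the argument it later spells out in the proof of Theorem \ref{bratteli09.thm-eqrel}), so your added care is welcome rather than a deviation.
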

\begin{proof}
We first prove that the $O_{\gamma\gamma'}$ form a base for the topology of $R_\Bb$.
A base open set in $R_\Bb$ reads 
\(O_{UV} = \{(x,y)\in R_\Bb\,:\, x\in U,\, a(x,y)\in V\}\)
for a clopen $U\subset\Xi$ and an open set $V\subset\RM^d$.
As noted after Definition \ref{bratteli09.def-eqrel}, the set of vectors $a(x,y)$ is a subset of the countable, discrete, and closed set \(T_1^\punc-T_1^\punc\).
Hence we can write \(O_{UV}\) as a countable union of open sets of the form 
\(O_{U,a}= \{(x,y)\in R_\Bb\,:\, x\in U,\, a(x,y)= a\}\)
for some \(a \in T_1^\punc-T_1^\punc\).
Since the sets \([\gamma],\gamma \in\Pi\), form a base for the topology of $\Xi$ we can write $O_{U,a}$ as a (finite) union of open sets of the form
\(O_{\gamma,a}= \{(x,y)\in R_\Bb\,:\, x\in [\gamma],\, a(x,y)= a\}\)
for some \(\gamma \in\Pi\).
And we can choose those $\gamma\in\Pi_n$ for $n$ large enough such that $a$ belongs to a puncture of a tile in $p_\gamma$, that is \(a=a(\gamma,\gamma')\) for some $\gamma'\in\Pi_n$, and thus \(O_{\gamma,a}=O_{\gamma\gamma'}\).
Note that this proves that the sets $[\gamma\gamma']$ also form a base for the topology of $\partial\Bb$.
Hence any open set in $R_\Bb$ is a union of $O_{\gamma\gamma'}$, and therefore the sets $O_{\gamma\gamma'}$ form a base for the topology of $R_\Bb$.

We now prove that $O_{\gamma\gamma'}$ is an $R_\Bb$-set.
By definition  $O_{\gamma\gamma'}$ is open, so it suffices to show that the maps  \(s\vert_{O_{\gamma\gamma'}}\) and \(r\vert_{O_{\gamma\gamma'}}\) are homeomorphisms.
First note that
\begin{equation}
\label{bratteli09.eq-bases}
s(O_{\gamma\gamma'})=[\gamma\gamma']\,, \quad \text{ and} \quad  r(O_{\gamma\gamma'})=[\gamma'\gamma] \,.
\end{equation}
Given \((y,z) \in O_{\gamma\gamma'}\), by Lemma \ref{bratteli09.lem-taileq}, we have \(T_z = T_y+a_{\gamma\gamma'}\).
Hence given $y \in [\gamma\gamma']$, there is a unique $z \in [\gamma'\gamma]$ such that \((y,z)\in O_{\gamma\gamma'}\).
And similarly, given $z \in [\gamma'\gamma]$, there is a unique $y \in [\gamma\gamma']$ such that \((y,z)\in O_{\gamma\gamma'}\).
Therefore the maps \(s\vert_{O_{\gamma\gamma'}}\) and \(r\vert_{O_{\gamma\gamma'}}\) are one-to-one.
But equation \eqref{bratteli09.eq-bases} shows that they map base open sets in $R_\Bb$ to base open set in $\partial \Bb$.
Hence those maps are homeomorphisms.
\end{proof}

We now state the main theorems, which characterize the equivalence relation $R_\Bb$, and compare it with $R_\Xi$.
\begin{theo}
\label{bratteli09.thm-etale}
The equivalence relation $R_\Bb$ is {\em \'etale}.
\end{theo}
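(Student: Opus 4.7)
The plan is to verify each of the three conditions in Definition \ref{bratteli09.def-etale} by exploiting the base of $R_\Bb$-sets $\{O_{\gamma\gamma'}\}$ supplied by Lemma \ref{bratteli09.lem-baseRsets}, together with the fact that translation vectors $a(x,y)$ live in the discrete closed set $T_1^\punc - T_1^\punc$.

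First I would dispatch condition (iii). Lemma \ref{bratteli09.lem-baseRsets} already states that each $O_{\gamma\gamma'}$ is open and that $s$ and $r$ restrict to homeomorphisms $O_{\gamma\gamma'} \to [\gamma\gamma']$ and $O_{\gamma\gamma'} \to [\gamma'\gamma]$ respectively. So $s$ and $r$ are local homeomorphisms; openness follows because an arbitrary open set in $R_\Bb$ is a union of such $O_{\gamma\gamma'}$, each of which is sent to a clopen set in $\partial\Bb$.

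Next I would show condition (ii), that the diagonal $\Delta(R_\Bb)$ is open. Given any $x\in\partial\Bb$, pick for each $n$ the prefix $\gamma_n = x\vert_{\Pi_n}$ and take the trivial horizontal edge at $r(\gamma_n)$; then $a_{\gamma_n\gamma_n}=0$, so $O_{\gamma_n\gamma_n}$ is an open neighborhood of $(x,x)$ in $R_\Bb$. For any $(y,z)\in O_{\gamma_n\gamma_n}$, Lemma \ref{bratteli09.lem-taileq} gives $T_z=T_y+0$, and aperiodicity (Theorem \ref{bratteli09.thm-hull}(iv)) forces $z=y$ via the injectivity of $\varphi$. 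Hence $O_{\gamma_n\gamma_n}\subset\Delta(R_\Bb)$, proving that the diagonal is a neighborhood of each of its points.

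Finally I would verify condition (i). To see that $R_\Bb^2 \subset R_\Bb\times R_\Bb$ is closed, take a sequence $((x_n,y_n),(y_n,z_n))$ in $R_\Bb^2$ converging to $((x,y),(y',z))$ in $R_\Bb\times R_\Bb$. Convergence in $R_\Bb$ forces $y_n\to y$ and $y_n\to y'$ in $\partial\Bb$, and since $\partial\Bb$ is Hausdorff we conclude $y=y'$, so the limit lies in $R_\Bb^2$. For continuity of the composition map $((x,y),(y,z))\mapsto (x,z)$ I would use the cocycle identity $a(x,z)=a(x,y)+a(y,z)$, which is an immediate consequence of the construction of $a(\cdot,\cdot)$ in Lemma \ref{bratteli09.lem-taileq}: if $(x_n,y_n)\to(x,y)$ and $(y_n,z_n)\to(y,z)$, then $x_n\to x$ in $\partial\Bb$ and $a(x_n,z_n)=a(x_n,y_n)+a(y_n,z_n)\to a(x,y)+a(y,z)=a(x,z)$, so $(x_n,z_n)\to(x,z)$ in $R_\Bb$. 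Continuity of the projections to the first and second factor is built into the definition of the topology on $R_\Bb$. The only step requiring any care is the diagonal being open: it is the one place where aperiodicity enters essentially, through the injectivity of $\varphi$ used to deduce $z=y$ from $T_z=T_y$; everything else is bookkeeping with the base of $R_\Bb$-sets and the cocycle identity.
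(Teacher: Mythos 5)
Your proof is correct, and while it rests on the same two pillars as the paper's argument --- Lemma \ref{bratteli09.lem-baseRsets} and the discreteness of the translation set $T_1^\punc-T_1^\punc$ --- it departs from the paper's route in two places, both to good effect. For condition (ii), the paper argues that $R_\Bb\setminus\Delta(R_\Bb)$ is closed by producing, for each $(x,y)$ off the diagonal, a sequence in $R_\Bb\setminus\Delta(R_\Bb)$ converging to it; you instead exhibit around each $(x,x)$ the basic $R_\Bb$-set $O_{\gamma_n\gamma_n}$ built from the trivial horizontal edge (so $a_{\gamma_n\gamma_n}=0$) and observe that Lemma \ref{bratteli09.lem-taileq} plus injectivity of $\varphi$ forces every element of it onto the diagonal. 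This is more direct, and it is the cleaner way to see (ii). For condition (i), the paper shows the complement of $R_\Bb^2$ is open by hand with product base sets and then computes preimages of the $O_{\gamma\gamma'}$ under the three structure maps; you use a sequential closedness argument together with the cocycle identity $a(x,z)=a(x,y)+a(y,z)$, which indeed holds because aperiodicity makes $a(x,y)$ the \emph{unique} vector with $T_y=T_x+a(x,y)$, and it renders the continuity of the composition map transparent. Two points worth making explicit in a final write-up: the step ``$(x_n,y_n)\to(x,y)$ in $R_\Bb$ forces $y_n\to y$'' is not part of the definition of the topology but follows from continuity of the range map, so the order in which you verify the conditions (with (iii) first) matters; and sequential arguments for closedness are legitimate here because the countably many sets $O_{\gamma\gamma'}$ form a base, so $R_\Bb$ is second countable and metrizable.
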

\begin{proof}
We check conditions (i), (ii), and (iii) of Definition \ref{bratteli09.thm-etale}.

(iii) Let us show first that the maps $r$ and $s$ are continuous.
It suffices to show that \(s^{-1}[\gamma]=\{(x,y)\,:\,x\in[\gamma],\, x\sim y\}\) and \(r^{-1}([\gamma])=\{(x,y)\,:\,y\in[\gamma],\, x\sim y\}\) are open in $R_\Bb$.
Pick $(x,y) \in s^{-1}[\gamma]$ (respectively $(x,y)\in r^{-1}[\gamma]$).
Since $x\sim y$, there exists $\gamma'$ such that \(x\in[\gamma\gamma']\subset[\gamma]\) (respectively \(y\in[\gamma'\gamma]\subset[\gamma]\)).
Since \(a(x,y)=a(\gamma,\gamma')\) we have \((x,y)\in O_{\gamma\gamma'} \subset s^{-1}([\gamma])\) (respectively \((x,y)\in O_{\gamma\gamma'} \subset r^{-1}([\gamma])\)).
Thus \(s^{-1}([\gamma])\) (respectively \(r^{-1}([\gamma])\)) is open.

We have showed in Lemma \ref{bratteli09.lem-baseRsets} that sets $O_{\gamma\gamma'}$ are base $R_\Bb$-sets.
Hence the maps $r$ and $s$ are local homeomorphisms. 
From equation \eqref{bratteli09.eq-bases}, we see that they are also open.

(i) Pick \(w=((x_1,x_2),(x_3,x_4))\), with $x_3\ne x_3$, in \(R_\Bb \times R_\Bb \setminus R_\Bb^2\).
Let \(\gamma'_2, \gamma'_3 \in\Pi_n\) be such that for all \((x,y)\in[\gamma'_2]\times [\gamma'_3]\) we have $x\ne y$.
Choose $\gamma_1,\gamma_2 \in\Pi_m$, $m\ge n$, with $\gamma_2\vert_{\Pi_n}=\gamma'_2$, such that \((x_1,x_2)\in O_{\gamma_1\gamma_2}\).
And choose similarly  $\gamma_3,\gamma_4 \in\Pi_l$, $l\ge n$, with $\gamma_3\vert_{\Pi_n}=\gamma'_3$, such that \((x_3,x_4)\in O_{\gamma_3\gamma_4}\).
The set \(O_{\gamma_1\gamma_2} \times  O_{\gamma_3\gamma_4}\) is open in \(R_\Bb \times R_\Bb \setminus R_\Bb^2\) and contains $w$.
Hence \(R_\Bb \times R_\Bb \setminus R_\Bb^2\) is open, and therefore \(R_\Bb^2\) is closed in \(R_\Bb \times R_\Bb\).

Call $p_1$ the map that sends \(((x,y),(y,z))\) to $(y,z)$, $p_2$ that which sends it to $(x,z)$, and $p_3$ that which sends it to $(x,y)$.
We have
\[
\left\{
\begin{array}{rcl}
p_1^{-1}(O_{\gamma\gamma'}) & = & s^{-1}([\gamma'\gamma]) \times O_{\gamma\gamma'}\\
p_2^{-1}(O_{\gamma\gamma'}) & = & r^{-1}([\gamma\gamma']) \times s^{-1}([\gamma'\gamma])\\
p_3^{-1}(O_{\gamma\gamma'}) & = & O_{\gamma\gamma'} \times r^{-1}([\gamma'\gamma]) 
\end{array}
\right. \,,
\]
and the sets on the right hand sides are all open sets in $R_\Bb^2$.
Hence the maps $p_1,p_2$, and $p_3$ are continuous.

(ii) Let \((x,y)\in R_\Bb \setminus \Delta(R_\Bb)\), so we have $x\ne y$.
For each $n$ pick \(x_n \in [x\vert_{\Pi_n}]\), and define \(y_n\in\partial\Bb\) to coincide with $y$ on $\Pi_n$, and with $x_n$ on its tail.
Since $x\ne y$ we have $x_n\ne y_n$, hence \((x_n,y_n)\in  R_\Bb \setminus \Delta(R_\Bb)\), for all $n\ge n_0$ for some $n_0$.
Since $(x,y)\in R_\Bb$, by Lemma \ref{bratteli09.lem-taileq}, there exists $n_1$ such that for $n>n_1$ we have \(a(x\vert_{\Pi_n},y\vert_{\Pi_n}) = a(x,y)\).
Set \(n_2 = \max(n_0,n_1)+1\).
We have proved that the sequence \((x_n,y_n)_{n\ge n_2}\) has all its elements in \(R_\Bb \setminus \Delta(R_\Bb)\), and is such that: \(x_n \rightarrow x\) in $\partial \Bb$, and \(a(x_n,y_n)=a(x,y)\).
Therefore it converges to $(x,y)$ in \(R_\Bb \setminus \Delta(R_\Bb)\).
This proves that \(R_\Bb \setminus \Delta(R_\Bb)\) is closed in $R_\Bb$, hence that \(\Delta(R_\Bb)\) is open in $R_\Bb$.
\end{proof}

\begin{theo}
\label{bratteli09.thm-eqrel}
The two equivalence relations $R_\Bb$ on $\partial \Bb$, and $R_\Xi$ on $\Xi$, are homeomorphic:
\[
R_\Bb \ \cong \ R_\Xi \,.
\]
The homeomorphism is induced by \(\varphi : \partial \Bb \rightarrow \Xi\) from Theorem \ref{bratteli09.thm-isotrans}.
\end{theo}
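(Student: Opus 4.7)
The plan is to show that the map $\widetilde{\varphi} := \varphi\times\varphi$ induced by the homeomorphism of Theorem~\ref{bratteli09.thm-isotrans} sends $R_\Bb$ bijectively onto $R_\Xi$ and that the two topologies correspond under this identification. Once this is done, the theorem follows.

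The inclusion $\widetilde{\varphi}(R_\Bb)\subseteq R_\Xi$ is immediate from Lemma~\ref{bratteli09.lem-taileq}: if $x\sim y$ then $T_y = T_x + a(x,y)$, so $(T_x,T_y)\in R_\Xi$. For the reverse inclusion, I would start with $(T,T+a)\in R_\Xi$, set $x=\varphi^{-1}(T)$ and $y=\varphi^{-1}(T+a)$, and for each $n$ let $\tau^x_n$ (resp.\ $\tau^y_n$) denote the level-$n$ super-tile of $T$ containing $0$ (resp.\ $-a$), with punctures $p^x_n$ and $p^y_n$ in $T$. The key geometric observation is that these two super-tiles are at distance at most $|a|$ in $T$, while the inner radii $r_n$ at level $n$ grow to infinity by Definition~\ref{bratteli09.def-nested}(iii); consequently the level-$n$ collar eventually contains every super-tile within distance $|a|$, and for all $n$ large enough, say $n\ge n_0$, the pair $(\tau^x_n,\tau^y_n)$ lies in each other's collar. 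This produces a horizontal edge $h_n\in\Hh_n$ with $s(h_n)=r(x|_{\Pi_n})$, $r(h_n)=r(y|_{\Pi_n})$, and label $u(h_n)=p^y_n-p^x_n$.

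Next I would verify the commutative diagrams of Definition~\ref{bratteli09.def-equiv}. Unwinding Definition~\ref{bratteli09.def-tilbrat} gives $u(x|_{\Pi_n})=p^x_n$ and $u(y|_{\Pi_n})=p^y_n+a$, the shift by $a$ reflecting $T_y=T+a$; hence $u(e^x_n)=p^x_n-p^x_{n-1}$ and $u(e^y_n)=p^y_n-p^y_{n-1}$. A direct cancellation then yields
\[
u(e^x_n)+u(h_n) \;=\; p^y_n-p^x_{n-1} \;=\; u(h_{n-1})+u(e^y_n)
\]
for every $n>n_0$, which is the commutativity condition. Hence $x\sim y$, establishing $\widetilde{\varphi}(R_\Bb)\supseteq R_\Xi$. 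For the topological part, recall that the topology on $R_\Bb$ is defined by $(x_n,y_n)\to(x,y)$ iff $x_n\to x$ in $\partial\Bb$ and $a(x_n,y_n)\to a(x,y)$ in $\RM^d$, while that on $R_\Xi$ is defined by $T_n\to T$ in $\Xi$ together with convergence of the translation vectors. Since $\varphi$ is a homeomorphism and, by Lemma~\ref{bratteli09.lem-taileq}, $a(x,y)$ equals the vector from $\varphi(x)$ to $\varphi(y)$, these two notions of convergence correspond term by term under $\widetilde{\varphi}$.

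The main obstacle I foresee is the existence of the horizontal edges $h_n$ in the reverse inclusion: one must carefully justify that for $n$ large enough, the super-tiles $\tau^x_n$ and $\tau^y_n$ really fall into each other's level-$n$ collar, so that $h_n$ belongs to $\Hh_n$. This rests on the growth $r_n\to\infty$ ensured by the properness of the nested sequence of tilings, combined with the fact that the collar at level $n$ captures all super-tiles within a radius that increases with $n$.
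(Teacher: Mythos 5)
Your proposal follows essentially the same route as the paper's proof: bijectivity of $\varphi\times\varphi$ via Lemma \ref{bratteli09.lem-taileq} for one inclusion, and for the other the observation that the level-$n$ supertiles containing $0$ and $-a$ are within distance $|a|$ while $r_n\to\infty$, so they eventually lie in each other's collars and yield the required horizontal edges; the topological identification is handled identically. Your explicit telescoping verification of the label condition $u(e^x_n)+u(h_n)=u(h_{n-1})+u(e^y_n)$ is a detail the paper leaves implicit, and it checks out.
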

\begin{proof}
Consider the map \( \varphi^\ast : R_\Bb \rightarrow R_\Xi\), given by
\[
\varphi^\ast (x,y) = \bigl( T_x, T_y = T_x+a(x,y) \bigr) \,.
\]
Since $\varphi$ is a homeomorphism, \(\varphi^\ast\) is injective.
To prove that is surjective, consider \((T,T'=T+a)\in R_\Xi\) and let us show that \((x_T,x_{T'})\) belongs to $R_\Bb$, {\it i.e.} that $x_T\sim x_{T'}$.
For each $n\in\NM$, call $t_n, t'_n$, the tiles in $\Vv_n$ such that \(\kappa_n^c(T) \in t_n\) and \(\kappa_n^c(T') \in t'_n\) (see Proposition \ref{bratteli09.prop-projhulln}).
The nested sequence of tilings $(T_n)_{n\in\NM}$ induces a nested sequence $(\tilde{T}_n)_{n\in\NM}$ with \(\tilde{T}_1 = T\).
In $\tilde{T}_n$ the origin lies in a translate $\tilde{t}_n$ of $t_n$, and the point $a$ in a translate $\tilde{t}'_n$ of $t'_n$.
Since $r_n\rightarrow \infty$ (condition (iii) in Definition \ref{bratteli09.def-nested}), there exists $n_0$ such that for all $n>n_0$ those two tiles  $\tilde{t}_n$ and  $\tilde{t}'_n$ are within a distance $r_n$ to one another: \(\dist(\tilde{t}_n, \tilde{t}'_n) \le r_n\).
Therefore we have \(\tilde{t}_n \in \col(\tilde{t}'_n)\) and \(\tilde{t}'_n \in  \col(\tilde{t}_n)\) (Definition \ref{bratteli09.def-ctile}).
This means that for all $n>n_0$ there exists a horizontal edge \(h_n\in\Hh_n\) with source \(t_n\) and range \(t'_n\) (Definition \ref{bratteli09.def-cBrat}).
As $x_T$ and $x_{T'}$ are the infinite paths in $\partial\Bb$  through the vertices \(t_n\) and \(t'_n\) respectively, we have $x_T \sim x_{T'}$.
This proves that $\varphi^\ast$ is a bijection.

Now a sequence \((x_n,y_n)_{n\in\NM}\) converges to \((x,y)\) in $R_\Bb$ if and only if \(x_n\rightarrow x\) in $\partial \Bb$, and \(a(x_n,y_n)\rightarrow a(x,y)\) in $\RM^d$.
This is the case if and only if \(T_{x_n} \rightarrow T_x = T_y+a(x,y)\) in $\Xi$ and \(a(x_n,y_n)\rightarrow a(x,y)\) in $\RM^d$, since for all $n$ one has \(T_{y_n} = T_{x_n} + a(x_n,y_n)\) by Lemma \ref{bratteli09.lem-taileq}.
Hence the map $\varphi^\ast$ and its inverse are continuous.
\end{proof}

\begin{coro} 
\label{bratteli09.cor-groupoid}
The groupoid of the equivalence relation $R_\Bb$ is homeomorphic to $\Gamma_\Xi$.
\end{coro}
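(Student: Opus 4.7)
The plan is to deduce the corollary directly from Theorem \ref{bratteli09.thm-eqrel} in two short moves: (a) identify the groupoid of the equivalence relation $R_\Xi$ with $\Gamma_\Xi$, and (b) check that the pair $(\varphi,\varphi^\ast)$ from Theorems \ref{bratteli09.thm-isotrans} and \ref{bratteli09.thm-eqrel} is a homeomorphism of topological groupoids. Recall that the groupoid of an equivalence relation $R$ on $X$ has $X$ as object space and $R$ as morphism space, with source $(x,y)\mapsto y$, range $(x,y)\mapsto x$, composition $(x,y)\cdot(y,z)=(x,z)$, and inverse $(x,y)\mapsto(y,x)$.

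For step (a), I would define $\psi:R_\Xi\to\Gamma_\Xi$ by $\psi(T,T')=(T,a)$ where $a\in\RM^d$ is the unique vector with $T'=T+a$ (uniqueness follows from strong aperiodicity, Theorem \ref{bratteli09.thm-hull}(iv)). Continuity of $\psi$ and its inverse are immediate from the definition of the topology on $R_\Xi$ given in Definition \ref{bratteli09.def-ERhull} (convergence of $(T_n,T_n+a_n)$ in $R_\Xi$ is the same as convergence of $(T_n,a_n)$ in $\Xi\times\RM^d$). With the groupoid structure $r(T,a)=T$, $s(T,a)=T+a$, $(T,a)\cdot(T+a,b)=(T,a+b)$, $(T,a)^{-1}=(T+a,-a)$ on $\Gamma_\Xi$, $\psi$ intertwines $s$, $r$, composition, and inversion tautologically.

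For step (b), Theorem \ref{bratteli09.thm-eqrel} provides the homeomorphism $\varphi^\ast:R_\Bb\to R_\Xi$ given by $\varphi^\ast(x,y)=(T_x,T_y)$, and Theorem \ref{bratteli09.thm-isotrans} gives the homeomorphism $\varphi:\partial\Bb\to\Xi$. Since $r\circ\varphi^\ast(x,y)=T_x=\varphi\circ r(x,y)$ and likewise $s\circ\varphi^\ast=\varphi\circ s$, and since $\varphi^\ast((x,y)\cdot(y,z))=(T_x,T_z)=\varphi^\ast(x,y)\cdot\varphi^\ast(y,z)$ and $\varphi^\ast((x,y)^{-1})=(T_y,T_x)=\varphi^\ast(x,y)^{-1}$, the pair $(\varphi,\varphi^\ast)$ is an isomorphism of topological groupoids from the groupoid of $R_\Bb$ onto the groupoid of $R_\Xi$. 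Composing with $\psi$ yields the desired homeomorphism onto $\Gamma_\Xi$.

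The only point I would double-check is that the topology on $R_\Bb$ from Definition \ref{bratteli09.def-eqrel} is truly compatible with the groupoid operations at the level of $R_\Bb$ itself; but this has already been established in the proof of Theorem \ref{bratteli09.thm-etale}, where continuity of the composition projection $p_2:((x,y),(y,z))\mapsto(x,z)$ and of inversion is built in (via the $R_\Bb$-set base $O_{\gamma\gamma'}$). Hence no serious obstacle remains, and the corollary is essentially a formal consequence of Theorem \ref{bratteli09.thm-eqrel} together with the tautological identification of the equivalence-relation groupoid of $R_\Xi$ with $\Gamma_\Xi$.
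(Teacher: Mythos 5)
Your proposal is correct and follows exactly the route the paper intends: the corollary is stated without proof precisely because it is the formal consequence of Theorem \ref{bratteli09.thm-eqrel} together with the tautological identification of the equivalence-relation groupoid of $R_\Xi$ with $\Gamma_\Xi$ (Definition \ref{bratteli09.def-gpd}), which is what you spell out. Your verification that $\varphi^\ast$ intertwines range, source, composition, and inversion, and that $(T,T')\mapsto(T,a)$ is a homeomorphism $R_\Xi\to\Gamma_\Xi$ by strong aperiodicity and the definition of the topology in Definition \ref{bratteli09.def-ERhull}, fills in the details correctly.
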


\section{Examples: one-dimensional tilings}
\label{bratteli09.sect-examples}

We illustrate here our construction for dimension $1$ substitutions.
In this case, if one chooses the proper sequence of tilings according to the substitution, then one recovers the usual formalism of Bratteli diagrams associated with the (Abelianization matrix of the) substitution.
We show here that the horizontal structure introduced in Definition~\ref{bratteli09.def-cBrat} allows to recover natural minimal and maximal infinite paths, and that the equivalence relation $R_\Bb$ is then exactly generated by $R_{AF}$ and the set of minimal and maximal paths \((x_{min},x_{max}) \notin R_{AF}\) such that \(V(x_{max}) =x_{min}\), where $V$ is the Vershik map on $\Bb$ (corresponding to an associated ordering of the edges).
This will be shown carefully in examples, but this fact is general: the following result holds.
\begin{proposi}
\label{bratteli09.prop-1d}
Let $\Bb$ be a collared Bratteli diagram associated with a $1$-dimensional tiling, with labelled edges.
Then there is a partial order on edges, which induces a partial order on infinite paths.
Furthermore, there is a one-to-one map $\psi$ from the set of maximal paths to the set of minimal paths, such that:
\[
 R_\Bb = R_{AF} \  \wedge \bigcup_{x \text{ maximal path}}{(x,\psi(x))}.
\]
\end{proposi}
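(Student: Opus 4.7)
The plan is to exploit the linear structure of dimension one to define the order, identify maximal and minimal paths geometrically as paths sitting on either side of an AF-boundary, and then decompose any element of $R_\Bb$ into AF-moves and single boundary crossings via $\psi^{\pm 1}$.

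First I would define the order on $\Ee$. In dimension one the substitution $\sigma_n$ sends each collared tile $\hat{t}_j^{(n)} \in \Vv_n$ to an ordered left-to-right concatenation of collared tiles of $T_{n-1}$, so for every $v \in \Vv_n$ with $n \ge 2$ the fibre $r^{-1}(v)$ is naturally totally ordered by the position of the corresponding sub-tile inside $\sigma_n(v)$; edges from distinct fibres are declared incomparable. This gives a partial order on $\Ee$ which extends to a partial order on $\partial\Bb$ in the usual way: $x = (x_n)_n$ is \emph{maximal} (resp.\ \emph{minimal}) when every $x_n$ is the maximum (resp.\ minimum) of $r^{-1}(r(x_n))$.

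Next I would interpret these extremal paths geometrically. By construction the patch $p_{x\vert_{\Pi_n}}$ is a linearly ordered concatenation of tiles of $T_1$, and the tile $t_1 = t_{x\vert_{\Pi_1}}$ containing the origin sits at the position recorded by $x_2,\dots,x_n$. Hence $x$ is maximal iff $t_1$ is the rightmost sub-tile of $p_{x\vert_{\Pi_n}}$ for every $n$; in that case $\dist(t_1,\partial t_n)$ stays bounded by the common diameter of the tiles of $T_1$, so $x\in F$ by Corollary~\ref{bratteli09.cor-Fsigma}, and in $T_x$ the origin tile is the rightmost tile of its AF-region. Minimal paths admit the mirror description. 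The map $\psi$ is then defined geometrically: for $x$ maximal, let $p\in T_x^\punc$ be the puncture of the tile immediately to the right of the AF-boundary adjacent to the origin tile and set $\psi(x) = \varphi^{-1}(T_x - p)$, with $\varphi$ the homeomorphism of Theorem~\ref{bratteli09.thm-isotrans}. In $T_{\psi(x)} = T_x - p$ the origin now sits at the leftmost tile of the AF-region that lay on the right of the boundary in $T_x$, so $\psi(x)$ is minimal by the mirror description. The symmetric construction (take the tile to the \emph{left} of the boundary) provides an explicit inverse, so $\psi$ is a bijection from maximal to minimal paths.

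Finally I would verify the identification of $R_\Bb$. The inclusion $\supseteq$ is immediate: $R_{AF}\subset R_\Bb$ by Remark~\ref{bratteli09.rem-AFcomp}, and $(x,\psi(x))\in R_\Bb$ by Theorem~\ref{bratteli09.thm-eqrel} because $T_{\psi(x)}$ is a translate of $T_x$. For the reverse inclusion, take $(x,y)\in R_\Bb$ and write $T_y = T_x + a$; the punctures of $T_x$ lying in the compact interval between $0$ and $-a$ cross only finitely many AF-boundaries, since these form a discrete subset of $\RM$. One proceeds by induction on this number: when it is zero the two origins lie in the same AF-region, so $x\eaf y$ by Remark~\ref{bratteli09.rem-AForbit}; otherwise an $R_{AF}$-move inside the current AF-region brings $x$ to the tile adjacent to the nearest boundary on the side one must cross (yielding a maximal or minimal path by the geometric description), a single application of $\psi^{\pm 1}$ crosses that boundary, and the remaining translation meets one fewer AF-boundary.

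The main obstacle is to ensure that the intermediate $R_{AF}$-moves are actually feasible, i.e.\ that every AF-region encountered along the way really does have a rightmost (or leftmost) tile on the side we need to cross. This reduces to two points: the existence of an AF-boundary on the relevant side, which is forced by the fact that the origin must move past it; and the identification of the $R_{AF}$-orbit of the current tiling with the full set of translates by punctures inside the AF-region, which is precisely Remark~\ref{bratteli09.rem-AForbit}. Combined with the discreteness of AF-boundaries and the boundedness of $a$, this guarantees that the induction terminates and yields the claimed decomposition.
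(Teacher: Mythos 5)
Your proposal is correct and follows essentially the same route as the paper's (much sketchier) argument: order each fibre $r^{-1}(v)$ by the position of the corresponding sub-tile in $\sigma_n(t_v)$, identify maximal/minimal paths with tilings whose origin tile sits at the right/left end of its AF-region, and realize $\psi$ as the translation across the adjacent AF-boundary (the first-return/Vershik map). The only real difference is that you close the argument by induction on the number of AF-boundaries crossed, whereas the paper relies on the observation that in dimension $1$ a horizontal edge can only join adjacent tiles, so a pair in $R_\Bb\setminus R_{AF}$ involves exactly one crossing and each of its entries is directly tail-equivalent to one of $x_{max}$, $\psi(x_{max})$; your induction is sound and subsumes this.
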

The fact that our labels on vertical edges give a partial ordering on edges is immediate:
given $v \in \Vv$, the set $r^{-1} (v)$ is a set of edges encoding the inclusions of tiles in the substitution of~$t_v$.
In dimension $1$, it makes sense to define the edge of $r^{-1}(v)$ which corresponds to the \emph{leftmost} tile included in the substitution of $t_v$ (it is the edge with the \emph{bigger} label in $\RM$).
A minimal (respectively maximal) path is then a path made uniquely of minimal (respectively maximal) edges.
It is then an exercise to show that given two paths in $R_\Bb \setminus R_{AF}$ then they are tail-equivalent to a maximal path, respectively a minimal path.
This gives a pairing of minimal with maximal paths, and thanks to the tiles decorations, this pairing is one-to-one.
The fact that the groupoid relation can be recovered from the $AF$ relation and a finite number of pairs is already known, and our formalism recovers this here.
Furthermore, the pairing $\psi$ corresponds actually to the translation of the associated tilings (more precisely to the action of the fist return map on $\Xi$).
This map, the Vershik map, can be read from the Bratteli diagram (from the partial ordering of vertices), see~\cite{DHS99} for example for the definition of this map.

We treat in details the cases of the Fibonacci and the Thue-Morse tilings.
Those tilings have been extensively studied, and we refer the reader to \cite{AP98} for a short presentation, and to \cite{GruShep87} for further material.

Both tiling spaces are strongly aperiodic, repetitive, and FLC \cite{AP98}.
This implies that the substitution induces a homeomorphism on the tiling space.
Let \((\Omega,\sigma)\) denote either the Fibonacci or Thue-Morse tiling space, $\Xi$ its canonical transversal, and $\lambda$ the inflation constant of the substitution.
Fix $T\in\Xi$.
We can build a nested sequence of tilings associated with $T$.
For $k$ large enough, the nested sequence \((\lambda^{kn} \sigma^{-kn}(T))_{n\in\NM}\) is proper, because the $k$-th substitute of any tile contains one in its interior.
One can easily see however that condition (ii) in Definition~\ref{bratteli09.def-nested} can be replaced by the weaker assumption:

{\em For all $n$ large enough and each tile $t_n\in T_n$ there exists $m<n$ such that \(\sigma_{nm}(t_n)\) contains a tile of $T_m$ in its interior}.

For example, it is straightforward to see that the proof of Theorem~\ref{bratteli09.thm-isotrans} goes through with this weaker condition.
As noted earlier, we can take here $m=n-k$ independently of the tile, for some fixed $k$.

We will thus consider the sequence \((T_n)_{n\in\NM}\), with \(T_n=\lambda^n \sigma^{-n}(T)\).
The Bratteli diagram built in Definition~\ref{bratteli09.def-tilbrat}, without the label $u$, is therefore exactly the usual Bratteli diagram associated with a substitution (its Abelianization matrix).
For example, the encoding of words by paths, corresponds also exactly to the encoding of patches given in equation~\eqref{bratteli09.eq-pathpatch}.

\subsection{The Fibonacci tiling}
\label{bratteli09.ssect-Fibo}

Let \(\Omega\) be the Fibonacci tiling space, and $\Xi$ its canonical transversal.
Each tiling in $\Omega$ has two types of tiles up to translation, denoted $0$ and $1$.
The prototile $0$ is identified with the closed interval $[-1/2,1/2]$ with puncture at the origin, and the prototile $1$ is identified with $[-1/(2\phi),1/(2\phi)]$ with puncture at the origin, where \(\phi = (1+ \sqrt{5})/2\) is the golden mean.

The substitution is given by \(0\rightarrow 01\), \(1 \rightarrow 0\), and its inflation constant  is $\phi$.
We write $a,b,c,d,$ for the collared prototiles, where 
\[
\col(a) = 0 \dot{0} 1 \,, \quad 
\col(b) = 1 \dot{0} 0 \,, \quad
\col(c) = 1 \dot{0} 1 \,, \quad
\col(d) = 0 \dot{1} 0 \,,
\]
and where the dot indicates the tile that holds the puncture.
So $a,b$, and $c$ correspond to the tile $0$ but with different labels, while $d$ corresponds to $1$.
The substitution on collared tiles reads then
\[
\sigma(a)= cd\,, \quad
\sigma(b)= ad\,, \quad
\sigma(c)= ad\,, \quad
\sigma(d)= b\,.
\]

Let $T\in\Xi$, and consider the sequence \((T_n)_{n\in\NM}\), with \(T_n=\phi^n \sigma^{-n}(T)\).
The Bratteli diagram $\Bb$ associated with this proper sequence has then the following form between two generations (excluding the root):
\begin{equation}
\label{bratteli09.eq-fibo}
\xymatrix{
a_{n-1} \ar@{-}[ddrr]^(.9){2} \ar@{-}[ddrrrr]^(.9){2} && b_{n-1} \ar@{-}[ddrrrr]^(.9){1} && 
c_{n-1} \ar@{-}[ddllll]_(.9){2} && d_{n-1} \ar@{-}[ddllllll]^(.9){1} \ar@{-}[ddllll]_(.9){1} \ar@{-}[ddll]_(.9){1}\\
&& && && \\
a_n && b_n && c_n && d_n \\
}
\end{equation}
where $t_n$ is the support of \(\col (\sigma^n (t))\) for \(t=a,b,c,d\).
We write an arrow as \(e^n_{tt'}\in\Ee_n\) with \(s(e^n_{tt'})=t_{n-1}\) and \(r(e^n_{tt'})=t'_n\).
We have for $n\ge 2$
\[
u(e^n_{ab}) =u(e^n_{ac}) = u(e^n_{ca}) = \frac{1}{2\phi} \phi^{n-2}\,, \quad
u(e^n_{bd}) = 0\,, \quad
u(e^n_{da}) = u(e^n_{db}) = u(e^n_{dc}) = - \frac{1}{2} \phi^{n-2}\,.  
\]
Note that given a path \(\gamma\in\Pi\), the patch $p_\gamma$ of equation \eqref{bratteli09.eq-pathpatch} corresponds exactly to the word associated with that path in the usual formalism of the Bratteli diagram of a substitution.
For example, for the path \((e^1_a, e^2_{ac},e^3_{ca},e^4_{ab})\), we associate the patch or word \(\sigma^3(b)\rightarrow \sigma^2(\dot{a}d) \rightarrow  \sigma(\dot{c}db) \rightarrow \dot{a}dbad\).

The horizontal graph $\Hh_n$ has the form
\vspace{.5cm}
\[
\xymatrix{
a_{n} \ar@(ur,ul)@{-}[rr]^{ba} \ar@(d,d)@{-}[rrrrrr]^{ad} && b_{n} \ar@(u,u)@{-}[rrrr]^{db} && 
c_{n} \ar@(ur,ul)@{-}[rr]_{cd} \ar@(dr,dl)@{-}[rr]^{dc} && d_{n} 
}
\]
\vspace{.5cm}

where the indices show the patches corresponding to the edges (with left-right orientation).
We have not shown trivial edges, and have identified an edge and its opposite in the drawing.
If $tt'$ is one of the patches (with this orientation) in the above graph, we write an horizontal edge \(h^n_{tt'} \in\Hh_n\) with \(s(h^n_{tt'})=t'_{n-1}\) and \(r(h^n_{tt'})=t_n\).
And we will simply use \((h^n_{tt'})^\op\) to avoid confusions (for example between \((h^n_{cd})^\op\) and \(h^n_{dc}\)).
For $n\ge 1$ we have
\[
u(h^n_{ab}) = - \phi^{n-1}\,, \quad
u(h^n_{ad}) =  u(h^n_{cd}) = u(h^n_{dc}) = u(h^n_{db}) = - \frac{\phi}{2} \phi^{n-1} \,.
\]

There are only two commutative diagrams that one can write between two generations, namely
\begin{equation}
\label{bratteli09.eq-cdiagfibo}
\xymatrix{
b_{n-1} \ar[dd]^{e^n_{bd}} && \ar[ll]_{h^{n-1}_{ba}} \ar[dd]_{e^n_{ac}} a_{n-1} &&&
d_{n-1} \ar[dd]^{e^n_{db}} && \ar[ll]_{h^{n-1}_{dc}} \ar[dd]_{e^n_{ca}} c_{n-1} \\
&& &&& \\
d_n && \ar[ll]^{h^n_{cd}} c_n &&&
b_n && \ar[ll]^{h^n_{ab}} a_n 
}
\end{equation}
The translation do match in those diagrams, we have: \(u(e^n_{bd}) + u(h^{n-1}_{ba}) = u(e^n_{ac})+u(h^n_{cd})=  - \phi^{n-2}\), and \(u(e^n_{db}) + u(h^{n-1}_{dc}) = u(e^n_{ca})+u(h^n_{ab})=  - \phi^{n-2}(\phi+1)/2\).

Let us write $D_n^1$ and $D_n^2$ for the left and right above diagrams respectively.
The top horizontal edge of $D_{n+1}^1$ matches the bottom horizontal edge of $D_{n}^2$, and the top horizontal edge of $D_{n+1}^2$ matches the bottom one of $D_n^1$.
We can thus ``compose'' those diagrams, and consider the two infinite sequences \((D_2^1, D_3^2, D_4^1, \cdots D_{2n}^1, D_{2n+1}^2, \cdots)\) and \((D_2^2, D_3^1, D_4^2, \cdots D_{2n}^2, D_{2n+1}^1, \cdots)\).
Each of those sequences contains exactly two infinite paths, namely:
\begin{align*}
x^1_{min}& =(e^1_{a},e^2_{ac},e^3_{ca}, \cdots e^{2n}_{ac}, e^{2n+1}_{ca}, \cdots) &
x^1_{max}& =(e^1_{b},e^2_{bd},e^3_{db}, \cdots e^{2n}_{bd}, e^{2n+1}_{db}, \cdots) \\
x^2_{min}& =(e^1_{c},e^2_{ca},e^3_{ac}, \cdots e^{2n}_{ca}, e^{2n+1}_{ac}, \cdots) &
x^2_{max}& =(e^1_{d},e^2_{db},e^3_{bd}, \cdots e^{2n}_{db}, e^{2n+1}_{bd}, \cdots)
\end{align*}
(where we have added an edge to the root).
If we order the edges in $\Bb$ as shown in equation~\eqref{bratteli09.eq-fibo}, those are the two minimal and maximal infinite paths.
And if we let $V$ denote the Vershik map on $\partial \Bb$, we have \(V(x^i_{max})=x^i_{min}\), for $i=1,2$.

All those paths are pairwise non-equivalent in $R_{AF}$, but by definition they are equivalent in $R_\Bb$.
Now given two infinite paths \(x,y \in \partial \Bb\) such that \(x\eaf x^i_{min}\) and \(y\eaf x^i_{max}\), for $i=1$ or $2$, we have \(x\sim y\) in $R_\Bb$.
We have thus shown that $R_\Bb$ is generated by $R_{AF}$ and the two pairs \((x^1_{min},x^1_{max})\) and \((x^2_{min},x^2_{max})\):
\[
R_\Bb = R_{AF} \wedge \{(x^1_{min},x^1_{max}),(x^2_{min},x^2_{max}) \} \,.
\]

\subsection{The Thue-Morse tiling}
\label{bratteli09.ssect-TM}

Let \(\Omega\) be the Thue-Morse tiling space, and $\Xi$ its canonical transversal.
Each tiling has two types of tiles up to translation, denoted $0$ and $1$.
Each prototile is identified with the closed interval $[-1/2,1/2]$ with puncture at the origin.

The substitution is given by \(0\rightarrow 01\), \(1 \rightarrow 10\), and its inflation constant  is $2$.
We write $a,b,c,d,e,f,$ for the collared prototiles, where 
\begin{align*}
\col(a) &= 0 \dot{0} 1 &
\col(b) &= 1 \dot{0} 0 & 
\col(c) &= 0 \dot{1} 1 \\
\col(d) &= 1 \dot{1} 0 &
\col(e) &= 1 \dot{0} 1 &
\col(f) &= 0 \dot{1} 0
\end{align*}
and where the dot indicates the tile that holds the puncture.
So $a,b$, and $e$ correspond to the tile $0$ but with different labels, while $c,d,$ and $f$ correspond to $1$.
The substitution on collared tiles reads then
\[
\sigma(a)= bf\,, \quad
\sigma(b)= ec\,, \quad
\sigma(c)= de\,, \quad
\sigma(d)= fa\,, \quad
\sigma(e)= bc\,, \quad
\sigma(f)= da\,.
\]

Let $T\in\Xi$, and consider the nested sequence \((T_n)_{n\in\NM}\), with \(T_n= 2^n \sigma^{-n}(T)\).
The Bratteli diagram $\Bb$ associated with that sequence has then the following form between two generations (excluding the root):
\begin{equation}
\label{bratteli09.eq-TM}
\xymatrix{
a_{n-1} \ar@{-}[ddrrrrrr]^(.9){2} \ar@{-}[ddrrrrrrrrrr]_(.92){2} && 
b_{n-1} \ar@{-}[ddll]_(.9){1} \ar@{-}[ddrrrrrr]_(.9){1}  && 
c_{n-1} \ar@{-}[ddll]_(.94){2} \ar@{-}[ddrrrr]^(.9){2} && 
d_{n-1} \ar@{-}[ddll]_(.9){1} \ar@{-}[ddrrrr]^(.9){1} && 
e_{n-1} \ar@{-}[ddllllll]^(.9){1} \ar@{-}[ddllll]^(.9){2} && 
f_{n-1} \ar@{-}[ddllllllllll]^(.92){2} \ar@{-}[ddllll]_(.9){1} \\
&& && && && && \\
a_n && b_n && c_n && d_n && e_n && f_n
}
\end{equation}
where $t_n$ is the support of \(\col (\sigma^n (t))\) for \(t=a,b,c,d,e,f\).
We write an arrow as \(e^n_{tt'}\in\Ee_n\) with \(s(e^n_{tt'})=t_{n-1}\) and \(r(e^n_{tt'})=t'_n\).
We have for $n\ge 2$
\begin{align*}
u(e^n_{ba}) = u(e^n_{be}) = u(e^n_{dc}) = u(e^n_{df}) = u(e^n_{eb}) = u(e^n_{fd})            
& = \frac{1}{2}2^{n-2}\,, \\
u(e^n_{ad}) = u(e^n_{af}) = u(e^n_{cb}) = u(e^n_{ce}) = u(e^n_{ec}) =  u(e^n_{fa})
& = -\frac{1}{2} 2^{n-2}\,.
\end{align*}
Note that given a path \(\gamma\in\Pi\), the patch $p_\gamma$ of equation \eqref{bratteli09.eq-pathpatch} corresponds exactly to the word associated with that path in the usual formalism of the Bratteli diagram of a substitution.
For example, for the path \((e^1_a, e^2_{ad},e^3_{dc},e^4_{cb})\), we associate the patch or word \(\sigma^3(b)\rightarrow \sigma^2(e\dot{c}) \rightarrow  \sigma(bc\dot{d}e) \rightarrow ecdef\dot{a}bc\).

The horizontal graph $\Hh_n$ has the form
\vspace{.5cm}
\[
\xymatrix{
a_{n} \ar@{-}[rr]^{ab} \ar@{-}[ddrrrr]^{da} \ar@{-}[dd]_{fa}&& 
b_{n} \ar@{-}[rr]^{bc} \ar@{-}[ddll]^{bf} && 
c_{n} \ar@{-}[dd]^{cd} \ar@{-}[ddll]^{ec} \\
&& && \\ 
f_{n} \ar@(ur,ul)@{-}[rr]_{fe} \ar@(dr,dl)@{-}[rr]^{ef} &&
e_n   \ar@{-}[rr]_{de} && 
d_n
}
\]
\vspace{.5cm}

where the indices show the patches corresponding to the edges (with left-right orientation).
We have not shown trivial edges, and have identified an edge and its opposite in the drawing.
If $tt'$ is one of the patches (with this orientation) in the above graph, we write an horizontal edge \(h^n_{tt'} \in\Hh_n\) with \(s(h^n_{tt'})=t'_{n-1}\) and \(r(h^n_{tt'})=t_n\).
And we will simply use \((h^n_{tt'})^\op\) to avoid confusions (for example between \((h^n_{ef})^\op\) and \(h^n_{fe}\)).
For $n\ge 1$,  we have
\[
u(h^n_{tt'}) = -2^{n-1}\,.
\]

There are only four commutative diagrams that one can write between two generations, namely
\begin{equation}
\label{bratteli09.eq-cdiagTM1}
\xymatrix{
a_{n-1} \ar[dd]^{e^n_{af}} && \ar[ll]_{h^{n-1}_{ab}} \ar[dd]_{e^n_{be}} b_{n-1} &&&
f_{n-1} \ar[dd]^{e^n_{fa}} && \ar[ll]_{h^{n-1}_{fe}} \ar[dd]_{e^n_{eb}} e_{n-1} \\
&& &&& \\
f_n && \ar[ll]^{h^n_{fe}} e_n &&&
a_n && \ar[ll]^{h^n_{ab}} b_n 
}
\end{equation}
and 
\begin{equation}
\label{bratteli09.eq-cdiagTM2}
\xymatrix{
c_{n-1} \ar[dd]^{e^n_{ce}} && \ar[ll]_{h^{n-1}_{cd}} \ar[dd]_{e^n_{df}} d_{n-1} &&&
e_{n-1} \ar[dd]^{e^n_{ec}} && \ar[ll]_{h^{n-1}_{ef}} \ar[dd]_{e^n_{fd}} f_{n-1} \\
&& &&& \\
e_n && \ar[ll]^{h^n_{ef}} f_n &&&
c_n && \ar[ll]^{h^n_{cd}} d_n 
}
\end{equation}

The translation do match in those diagrams, for example we have: \(u(e^n_{af}) + u(h^{n-1}_{ab}) = u(e^n_{be})+u(h^n_{fe})= -(3/2) 2^{n-2}\), and \(u(e^n_{ce}) + u(h^{n-1}_{cd}) = u(e^n_{df})+u(h^n_{ef})= -(3/2) 2^{n-2}\).

Let us write $D_n^1$ and $D_n^2$ for the left and right diagrams in equation~\eqref{bratteli09.eq-cdiagTM1} respectively, and  $D_n^3$ and $D_n^4$ for those on the left and right of equation~\eqref{bratteli09.eq-cdiagTM2} respectively.
The diagrams $D_n^1$ and $D_n^2$, and $D_n^3$ and $D_n^4$ are ``composable'':  the top horizontal edge of one at generation $n+1$ matches the bottom horizontal edge of the other at generation $n$.
We consider the four infinite sequences \((D_2^1, D_3^2, D_4^1, \cdots D_{2n}^1, D_{2n+1}^2, \cdots)\) and \((D_2^2, D_3^1, D_4^2, \cdots D_{2n}^2, D_{2n+1}^1, \cdots)\), as well as  \((D_2^3, D_3^4, D_4^3, \cdots D_{2n}^3, D_{2n+1}^4, \cdots)\) and \((D_2^4, D_3^3, D_4^4, \cdots D_{2n}^4, D_{2n+1}^3, \cdots)\).
Each of those sequences contains exactly two infinite paths, namely:
\begin{align*}
x^1_{min}& =(e^1_{b},e^2_{be},e^3_{eb}, \cdots e^{2n}_{be}, e^{2n+1}_{eb}, \cdots) &
x^1_{max}& =(e^1_{a},e^2_{af},e^3_{fa}, \cdots e^{2n}_{af}, e^{2n+1}_{fa}, \cdots) \\
x^2_{min}& =(e^1_{e},e^2_{eb},e^3_{be}, \cdots e^{2n}_{eb}, e^{2n+1}_{be}, \cdots) &
x^2_{max}& =(e^1_{f},e^2_{fa},e^3_{af}, \cdots e^{2n}_{fa}, e^{2n+1}_{af}, \cdots) \\
x^3_{min}& =(e^1_{d},e^2_{df},e^3_{fd}, \cdots e^{2n}_{df}, e^{2n+1}_{fd}, \cdots) &
x^3_{max}& =(e^1_{c},e^2_{ce},e^3_{ec}, \cdots e^{2n}_{ce}, e^{2n+1}_{ec}, \cdots) \\
x^4_{min}& =(e^1_{f},e^2_{fd},e^3_{df}, \cdots e^{2n}_{fd}, e^{2n+1}_{df}, \cdots) &
x^4_{max}& =(e^1_{e},e^2_{ec},e^3_{ce}, \cdots e^{2n}_{ec}, e^{2n+1}_{ce}, \cdots)
\end{align*}
(where we have added edges to the root).
If we order the edges in $\Bb$ as shown in equation~\eqref{bratteli09.eq-TM}, those are the four minimal and four maximal infinite paths.
And if we let $V$ denote the Vershik map on $\partial \Bb$, we have \(V(x^i_{max})=x^i_{min}\), for $i=1,2,3,4$.

All those paths are pairwise non-equivalent in $R_{AF}$, but by definition they are equivalent in $R_\Bb$.
Now given two infinite paths \(x,y \in \partial \Bb\) such that \(x\eaf x^i_{min}\) and \(y\eaf x^i_{max}\), for $i=1,2,3$ or $2$, we have \(x\sim y\) in $R_\Bb$.
We have thus shown that $R_\Bb$ is generated by $R_{AF}$ and the two pairs \((x^i_{min},x^i_{max})\) for \(i=1,2,3,4\):
\[
R_\Bb = R_{AF} \wedge \{(x^1_{min},x^1_{max}),(x^2_{min},x^2_{max}),(x^3_{min},x^3_{max}),(x^4_{min},x^4_{max}) \} \,.
\]

\vspace{3 ex}

\hrule
\vspace{2 ex}

\begin{footnotesize}
\textsc{Jean Bellissard,} Georgia Institute of Technology, School of Mathematics and School of Physics\\
Postal address: Georgia Tech, School of Math., 686 Cherry street, Atlanta GA, 30332-0160, USA\\
\textsc{Antoine Julien}, Universit\'e de Lyon, Universit\'e Lyon 1, Institut Camille Jordan, UMR 5208 du CNRS, 43 boulevard du 11 novembre 1918, F-69622 Villeurbanne cedex, France \\
\textsc{Jean Savinien}, Universit\'e de Lyon, Universit\'e Lyon 1, Institut Camille Jordan, UMR 5208 du CNRS, 43 boulevard du 11 novembre 1918, F-69622 Villeurbanne cedex, France
\end{footnotesize}



\end{document}